\newtheorem{thm}{Theorem}[section]
\newtheorem{lem}[thm]{Lemma}
\newtheorem{cor}[thm]{Corollary}
\newtheorem{exa}{Example}[section]
\numberwithin{equation}{section}
\renewcommand{\a}{\alpha}
\renewcommand{\b}{\beta}
\newcommand{\e}{\varepsilon}
\newcommand{\de}{\delta}
\newcommand{\si}{\sigma}
\newcommand{\Si}{\Sigma}
\renewcommand{\t}{\tau}
\newcommand{\De}{\Delta}
\newcommand{\Ga}{\Gamma}
\newcommand{\La}{\Lambda}
\def\R{{\mathbb{R}}}
\def\N{{\mathbb{N}}}
\def\Z{{\mathbb{Z}}}
\def\T{{\mathbb{T}}}
\def\E{{\mathbb E}}
\newcommand{\supp}{\operatorname{supp}}
\newcommand{\gap}{\operatorname{gap}}
\definecolor{darkgreen}{rgb}{0,0.7,0}
\definecolor{orange}{rgb}{1,0.45,0}
\title{Motion by mean curvature from \\
Glauber-Kawasaki dynamics with speed change}
\author{Tadahisa Funaki$\,^{1)}$, Patrick van Meurs$\,^{2)}$, Sunder Sethuraman$\,^{3)}$ and Kenkichi Tsunoda$\,^{4)}$}
\begin{document}
\maketitle




\begin{abstract}
We derive a continuum mean-curvature flow as a certain hydrodynamic scaling limit of Glauber-Kawasaki dynamics with speed change. The Kawasaki part describes the movement of particles through particle interactions. It is speeded up in a diffusive space-time scaling. The Glauber part governs the creation and annihilation of particles. The Glauber part is set to favor two levels of particle density. It is also speeded up in time, but at a lesser rate than the Kawasaki part. Under this scaling, a mean-curvature interface flow emerges, with a homogenized `surface tension-mobility' parameter reflecting microscopic rates. The interface separates the two levels of particle density. 

Similar hydrodynamic limits have been derived in two recent papers; one where the Kawasaki part describes simple nearest neighbor interactions, and one where the Kawasaki part is replaced by a zero-range process. We extend the main results of these two papers beyond nearest-neighbor interactions. The main novelty of our proof is the derivation of a `Boltzmann-Gibbs' principle which covers a class of local particle interactions.

\footnote{
\hskip -6mm 
${}^{1)}$ Department of Mathematics, Waseda University,
3-4-1 Okubo, Shinjuku-ku, Tokyo 169-8555, Japan. \\
New address: Beijing Institute of Mathematical Sciences and Applications, 
Huairou district, Beijing, China. \\
e-mail: funaki@ms.u-tokyo.ac.jp \\
${}^{2)}$ Faculty of Mathematics and Physics, Kanazawa University, Kakuma, Kanazawa 920-1192, Japan. \\
e-mail: pjpvmeurs@staff.kanazawa-u.ac.jp \\
${}^{3)}$ Department of Mathematics, University of Arizona,
621 N.\ Santa Rita Ave., Tucson, AZ 85750, USA. \\
e-mail: sethuram@math.arizona.edu\\
${}^{4)}$ Faculty of Mathematics, Kyushu University, 744, Motooka, Nishi-ku, Fukuoka
819-0395, Japan. \\
e-mail: tsunoda@math.kyushu-u.ac.jp}
\footnote{
\hskip -6mm
Keywords: Hydrodynamic limit, Motion by mean curvature, Glauber-Kawasaki dynamics, Sharp interface limit.}
\footnote{
\hskip -6mm
Abbreviated title $($running head$)$: MMC from Glauber-Kawasaki dynamics}
\footnote{
\hskip -6mm
2020MSC: 60K35, 82C22, 74A50.}
\end{abstract}




\paragraph{Acknowledgements}

TF was supported in part by JSPS KAKENHI Grant Number JP18H03672.
PvM was supported by JSPS KAKENHI Grant Number JP20K14358. 
SS was supported by grant ARO W911NF-181-0311.
KT was supported by JSPS KAKENHI Grant Number JP18K13426 and JP22K13929.

\paragraph{Data availability statement} All data generated or analysed during this study are included in this published article 

\section{Introduction}
\label{s:intro}

We are interested in the hydrodynamic limit for Glauber-Kawasaki dynamics with speed change
under a certain scaling which leads to the motion by mean curvature for
a phase-separating interface appearing in the limit.
Glauber-Kawasaki dynamics describes a Markovian particle system on the microscopic scale where the particles move on the $d$-dimensional
discrete torus $\T_N^d :=(\Z/N\Z)^d = \{1,2,\ldots,N\}^d$ of size $N$. The configuration of the particles is described by $\eta  = \{ \eta_x \in \{0,1\} \}_{x \in \T_N^d}$, where $\eta_x = 0$ indicates that the site $x$ is vacant, and $\eta_x = 1$ indicates that the site $x$ is occupied by a particle. The generator of this process is expressed as 
\begin{equation} \label{LN}
  L_N = N^2 L_K + K L_G.
\end{equation}
The Kawasaki part, $L_K$, describes the rate $c_{x,y}(\eta)$ at which the particle occupations at neighboring sites $x$ and $y$ are swapped. If one site contains a particle and the other does not, then this swapping corresponds to a particle hopping from one site to the other. The Glauber part, $L_G$, describes the flip rate $c_x(\eta)$ at which $\eta_x$ flips from $0$ to $1$ and vice versa. This corresponds to birth and death of a particle at site $x$. 
In \eqref{eq:1.4} we give the precise definitions of $L_K$ and $L_G$.
The hydrodynamic limit corresponds to the limit $N \to \infty$ of these dynamics. The prefactor $N^2$ in front of $L_K$ is the natural scaling for which the Kawasaki part results in diffusion of the particle density on the macroscopic scale. We are interested in the regime where 
$1\le K=K(N) \to \infty$ as $N \to \infty$. 

Before describing the case $K \to \infty$ as $N \to \infty$, it is instructive to consider first the case of constant $K$ (i.e., independent of $N$). Under certain assumptions on the exchange rates $c_{x,y}(\eta)$ and the flip rates $c_x(\eta)$, one may expect from \cite{DFL,FHU} that the hydrodynamic limit is given by
\begin{equation}  \label{eq:RD-b}
\partial_t\rho= \De P(\rho) + Kf(\rho), \quad v\in \T^d.
\end{equation}
Here, $\rho = \rho(t,v) \in [0,1]$
is the particle density on the macroscopic domain $\T^d \: ( =[0,1)^d$ with periodic boundary), and the Laplace operator $\Delta$ acts on the variable $v$. Microscopic sites $x \in \T_N^d$ correspond on the macroscopic scale to the $d$-dimensional cube of size $1/N$ centered at $v = x/N \in \T^d$. The functions $P$ and $f$ can be explicitly expressed in terms of $c_{x,y}$ and $c_x$; see \eqref{eq:P} and \eqref{eq:f}.

We are interested in those $P$ and $f$ for which the reaction-diffusion equation \eqref{eq:RD-b} is a nonlinear Allen-Cahn equation. The width of the interface is proportional to $1/\sqrt K$. 
Then, as $K \to \infty$, it is shown in \cite{EFHPS, EFHPS-2} that, under certain assumptions on $P$ and $f$ 
(in particular the bistability and the balance condition; see (BS) below),
the solution of \eqref{eq:RD-b} converges to a sharp-interface limit; see \eqref{MCF}. 

The aim of this paper is to establish the hydrodynamic limit of the process generated by $L_N$ as $N \to \infty$ in the case where $K = K(N) \to \infty$ as $N \to \infty$; see Theorem \ref{thm:main}. We expect the limit to be the same as the sharp-interface limit of \eqref{eq:RD-b}. This limit is the mean curvature flow of a closed hypersurface $\Gamma_t \subset \T^d$ given by
\begin{equation} \label{MCF}  
  V = \lambda_0 \kappa \quad \text{on } \Gamma_t,
\end{equation}
where $V$ is the normal velocity of $\Gamma_t$, $\kappa$ is the mean curvature of $\Gamma_t$ multiplied by $d-1$, and $\lambda_0$ is defined by
\begin{equation*} 
  \lambda_0
  := \frac{ \int_{\alpha_1}^{\alpha_2} P'(\alpha) \sqrt{W(\alpha)} \, d \alpha }{ \int_{\alpha_1}^{\alpha_2} \sqrt{W(\alpha)} \, d \alpha },
  \qquad W(\alpha) := \int_{\alpha}^{\alpha_2} f(\b) P'(\b) \, d \b.
\end{equation*}
As explained in the appendix of \cite{EFHPS-2}, $\lambda_0$ is the product of the surface tension (proportional to the numerator) multiplied by the mobility (proportional to the reciprocal of the denominator).
Under our assumptions it turns out that $\lambda_0$ is well-defined and positive. Since $P$ and $f$ are determined in terms of the rates $c_{x,y}$ and $c_x$, our desired hydrodynamic limit will give a precise relation between the microscopic rates $c_{x,y}$ and $c_x$ and the resulting mobility of the interface on the macroscopic scale.
 
Our aim builds further on the literature on understanding curvature flows (or PDEs in general) as hydrodynamic limits of particle systems; see e.g. \cite{KS,Bon,FT,DFPV,Hay,EFHPS}. In particular, two of these papers are close to our aim. 

The first is \cite{FT}, which considers a less general setting in which $c_{x,y}(\eta) \equiv 1$. In that setting, the diffusion in the corresponding equation \eqref{eq:RD-b} is linear, i.e.\ $P(\rho) = \rho$. Our aim can therefore be reformulated as the generalization of the result in \cite{FT} to exchange rates $c_{x,y}(\eta)$ which may depend on the site $x$, the neighboring site $y$ and a finite-range dependence on the microscopic particle configuration.
The dependence of $c_{x,y}(\eta)$ on $\eta$ is called \lq speed change' in Kawasaki dynamics.
We will also allow for more general flip rates $c_x$. The main reason for pursuing this generalization is to discover the precise dependence of $\lambda_0$ on $c_{x,y}$ and $c_x$.

The second paper from the list is \cite{EFHPS}, where the only difference with our setting is that the Kawasaki part is replaced by a zero-range process where $c_{x,y}(\eta)$ depends, in terms of $\eta$, only on the single site variable $\eta_x$. While a zero-range process also gives rise to nonlinear diffusion on the macroscopic scale, the finite-range feature of the Kawasaki process is more natural than the corresponding zero-range feature in \cite{EFHPS}. Establishing the hydrodynamic limit with respect to Kawasaki interactions will therefore show a certain robustness of the mean-curvature scaling limit.  Moreover, the intrinsic difference with the zero-range process poses challenges which we overcome in various ways by combining and extending the techniques in \cite{EFHPS,FT}.

\subsection{The Glauber-Kawasaki dynamics}
\label{s:intro:GK}

Here we describe the Glauber-Kawasaki dynamics with speed change in full detail.
The configuration space is $\mathcal{X}_N = \{0,1\}^{\T_N^d}$.
We denote its elements by $\eta=\{\eta_x\}_{x\in\T_N^d}$.
The generator is given by $L_N$ in \eqref{LN}, where
\begin{equation}  \label{eq:1.4}
\begin{aligned}
L_Kf (\eta)   
&  = \frac12 \sum_{ \substack{ x, y\in\T_N^d \\ |x-y|=1 }} c_{x,y}(\eta)
\left\{  f\left(  \eta^{x,y}\right)  -f\left(\eta\right)  \right\},   \\
L_Gf (\eta)  
&  =\sum_{x\in\T_N^d} c_x(\eta)
  \left\{  f\left(  \eta^x \right)  -f\left(  \eta\right)  \right\}
\end{aligned}
\end{equation}
for functions $f$ on $\mathcal{X}_N$ (not to be confused with $f$ of the reaction term in \eqref{eq:RD-b}). 

Next we explain the symbols $\eta^{x,y}$, $\eta^{x}$, $c_{x,y}$ and $c_x$. First, $\eta^{x,y} \in \mathcal{X}_N$ is the 
configuration $\eta$ after an exchange happens 
between $x$ and $y$, i.e., 
\[
  (\eta^{x,y})_z 
  = \left\{ \begin{array}{ll}
    \eta_y
    &\text{if } z = x \\
    \eta_x
    &\text{if } z = y \\
    \eta_z
    &\text{otherwise.}
  \end{array} \right.
\]
Second, $\eta^x  \in \mathcal{X}_N$ is the configuration after a flip happens at $x$, i.e., 
\[
  (\eta^x)_z
  = \left\{ \begin{array}{ll}
    1-\eta_x
    &\text{if } z = x \\
    \eta_z
    &\text{otherwise.}
  \end{array} \right.
\]
Third, $c_{x,y}(\eta)$ are the bond-dependent (i.e.\ $c_{x,y}=c_{y,x}$) exchange rates of configurations at $x$ and $y$.
The choice of $y \in \T_N^d$ is limited to $|x - y|=1$, which means that $x$ and $y$ are neighboring sites. We regard $c_{x,y}$ as functions on the infinite configuration space 
$\mathcal{X}=\{0,1\}^{\Z^d}$. To list our assumptions on $c_{x,y}$, we set $e_i\in \Z^d$ as the unit vectors  of the direction $i$, and introduce $\t_x: \mathcal{X} \to \mathcal{X}$ (or $\mathcal{X}_N \to \mathcal{X}_N$)
as the translation of $\eta$ by $x\in \Z^d$ (or $\in \T_N^d$) defined by
$(\t_x\eta)_z= \eta_{z+x}$ for $z\in \Z^d$ (or $\in \T_N^d$, in which case $x+z$ is taken modulo $N$). For a function $f=f(\eta)$ on $\mathcal{X}$ or $\mathcal{X}_N$, 
we denote $\t_xf(\eta) := f(\t_x\eta)$. Our assumptions on $c_{x,y}$ are as follows. For all $\eta \in \mathcal X$ and all $x,y \in \T_N^d$ with $|x - y|=1$, we assume:
\begin{itemize}
\item[(1)] (non-degeneracy) $c_{x,y}(\eta)>0$.
\item[(2)] (translation-invariance) $c_{x,y}(\eta) = c_{0,y-x}(\t_x\eta)$.
\item[(3)] (finite range) $c_{0,y-x}$ is a local function on $\mathcal{X}$, i.e.\ $c_{0,y-x}$ has finite support. More precisely,
$c_{0,y-x}(\eta) = c_{0,y-x}(\eta_\La)$ for some universal $\La \subset \Z^d$ with finite cardinality, where $\eta_\Lambda := \{ \eta_x \}_{x \in \Lambda}$.
\item[(4)] (reversibility under Bernoulli measures)  $c_{x,y}(\eta)$ does not depend on 
$(\eta_x,\eta_y)$, i.e.\ $c_{x,y}(\eta^{x,y}) = c_{x,y}(\eta)$.
\item[(5)] (gradient condition) There exist local functions
$\{h_i(\eta)\}_{i=1}^d$ on $\mathcal{X}$ such that the current at the $e_i$-directed bond (edge) $(0,e_i)$
defined by $c_{0,e_i}(\eta) (\eta_0-\eta_{e_i})$ (cf.\ \cite[(GS) and Remark 5.3]{FHU})
can be written as $h_i(\eta)- h_i(\t_{e_i} \eta)$ for each $i$.
\end{itemize}
We wish to make four remarks on these five assumptions. First, due to $c_{x,y} = c_{y,x}$ and (2), all the rates $c_{x,y}$ are characterized by $c_{0,e_i}$ for $i = 1,\ldots,d$. Second, (1), (2) and (3) imply 
\begin{equation} \label{cxy:LB}
  c_{x,y}(\eta) \geq c_*
\end{equation}
for some positive constant $c_* > 0$ independent of $x,y,\eta$. Third, we will always assume that $N$ is large enough such that the ranges of the local functions in (3) and (5) fit inside (a translated copy of) $\T_N^d$. Fourth, (4) can alternatively be understood as a condition under which the 
Bernoulli measures $\nu_\rho$ satisfy detailed balance and are reversible. 

Finally, the fourth symbol $c_x(\eta)$ appears in $L_G$ and represents the flip rate. We assume that $c_x(\eta) = \t_x c(\eta)$ for some non-negative 
local function $c = c_0$ on $\mathcal{X}$ (regarded as that on
$\mathcal{X}_N$ for $N$ large enough). Since $\eta_0$ takes values in $\{0,1\}$, $c(\eta)$ can be decomposed as
\begin{equation}  \label{eq:c+-}
c(\eta) = c^+(\eta)(1-\eta_0)+c^-(\eta) \eta_0
\end{equation}
for some local functions $c^\pm$ which do not depend on $\eta_0$.
We interpret $c^+(\eta)$ and $c^-(\eta)$ as the rates of creation and annihilation
of a particle at $x=0$, respectively. This completes the description of the expression of the generator $L_N$ in \eqref{LN}. We remark that $N$ needs to be large enough so that the supports of $c_0$ and $c_{0,e_i}$ for $i=1,\ldots,d$ are contained in $\{0,1\}^\La$ for some box $\La$ centered at $0$ with side length $\ell < N$. However, this issue is of little importance since our aim is to study the limit $N \to \infty$. 
\smallskip 

Our list of assumptions on $c_{x,y}$ and $c$ is not complete yet. As anticipated in the introduction, we require several properties of the functions $P$ and $f$ in \eqref{eq:RD-b}. These properties will result into additional assumptions on $c_{x,y}$ and $c$. 

By extrapolating the results in \cite{DFL,FHU} 
 for a fixed $K\ge 1$ independent of $N$, one may expect that the empirical density of the process $\eta^N(t)$ generated by $L_N$ converges to a solution
of the generalized version of \eqref{eq:RD-b} given by 
\begin{equation}  \label{eq:RD}
\partial_t\rho= \sum_{i=1}^d \frac{\partial^2}{\partial v_i^2} P_i(\rho) + K f(\rho), \quad v\in \T^d,
\end{equation}
where the functions $P_i$ and $f$ are defined by
\begin{equation}  \label{eq:P}
P_i(\rho) := E^{\nu_\rho}[h_i(\eta)], 
\end{equation} 
\begin{align} \label{eq:f}
  f(\rho) 
&:= E^{\nu_\rho}[(1-2\eta_0) c(\eta)]    \\\notag
&=  E^{\nu_\rho}[c^+(\eta) (1-\eta_0) -c^-(\eta) \eta_0] \\ \notag
& = (1-\rho) E^{\nu_\rho}[c^+(\eta)] - \rho E^{\nu_\rho}[c^-(\eta)]
\end{align}
respectively, where $\nu_\rho$ is the Bernoulli measure on $\mathcal{X}$ 
with mean $\rho\in [0,1]$, and, for any distribution $\nu$, $E^\nu$ is the expectation with respect to $\nu$. Note that the functions $P_i$ and $f$ are completely characterized in terms of $c_{x,y}(\eta)$ and $c(\eta)$ respectively. Since $h_i$ and $c$ are local functions, $P_i$ and $f$ are polynomials. We demonstrate in Examples \ref{Ex:2.1} and \ref{exa:4.1} that our assumptions on $c_{x,y}$ and $c$ cover various choices for $P_i$ and $f$. 

\begin{exa}  \label{Ex:2.1}
We provide two examples for the exchange rates $c_{x,y}(\eta)$ satisfying assumptions {\rm (1)--(5)}, and compute the resulting functions $P_i$:
\begin{enumerate}
  \item $c_{x,y}(\eta) \equiv 1$. This corresponds to the original Kawasaki dynamics. For the functions $h_i$ in assumption {\rm (5)} one can simply take
$h_i(\eta) =\eta_0$ for all $i$, which by \eqref{eq:P} results in $P_i(\rho) = \rho$.
  \item $c_{0,e_i}(\eta) = 1+ \a_i(\eta_{-e_i}+\eta_{2e_i})$ for $ 
\a_i> - 1/2$; see \cite[Remark 5.3]{FHU}. A possible choice for the functions $h_i$ is
$$
h_i(\eta) = (\a_i+1)\eta_0+\a_i (\eta_{-e_i}-\eta_0)(\eta_0-\eta_{e_i}),
$$
which by \eqref{eq:P} results in $P_i(\rho)=\rho+\a_i \rho^2$.
\end{enumerate}
\end{exa} 

The assumptions on $P_i$ (and therefore on $c_{x,y}$) are:
\begin{itemize}
\item[(6)] $P_i=P$ for $i=1,\ldots,d$, i.e.\ $P_i$ is independent of $i$.
\end{itemize}
Note that both examples in Example \ref{Ex:2.1} can be made to fit to (6). 

Assumption (6) has three convenient consequences. First, the PDE \eqref{eq:RD} reduces to the form \eqref{eq:RD-b}.
Note that the diffusion matrix $D(\rho)=\{D_{ij}(\rho)\}_{i,j=1}^d$ of \eqref{eq:RD-b} is a diagonal matrix with diagonal elements all equal to $P'(\rho)$, i.e.\
$$
D_{ij}(\rho) = P'(\rho) \de_{ij}.
$$
Second, the $P$-balance condition for the reaction term 
$f(\rho)$ (see the condition (BS) below) becomes independent of $i$. Third, we claim that $P'(\rho) > 0$. To prove this claim, we first recall the Green-Kubo formula.

\begin{lem}[Green-Kubo formula {\cite[(1.12)]{FHU}}] \label{lem:GK}
For $i=1,\ldots,d$
$$
P_i'(\rho) =
\frac1{2\rho(1-\rho)} E^{\nu_\rho}[c_{0,e_i}(\eta) (\eta_0-\eta_{e_i})^2]
\qquad \text{for all } \rho \in (0,1).
$$ 
\end{lem}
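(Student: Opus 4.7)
The plan is to show independently that both sides of the displayed equation equal $E^{\nu_\rho}[c_{0,e_i}(\eta)]$. For the right-hand side, assumption (4) implies that $c_{0,e_i}(\eta)$ does not depend on $\eta_0$ or $\eta_{e_i}$; since $\nu_\rho$ is a Bernoulli product measure, $c_{0,e_i}$ is independent of $(\eta_0,\eta_{e_i})$ under $\nu_\rho$, so factoring the expectation together with $E^{\nu_\rho}[(\eta_0-\eta_{e_i})^2]=2\rho(1-\rho)$ yields $E^{\nu_\rho}[c_{0,e_i}(\eta)(\eta_0-\eta_{e_i})^2] = 2\rho(1-\rho)E^{\nu_\rho}[c_{0,e_i}(\eta)]$.

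For the left-hand side, differentiating the product-measure formula for $E^{\nu_\rho}[h_i]$ in $\rho$ yields the identity
\[
\rho(1-\rho)P_i'(\rho) = \sum_{y\in\Z^d}\mathrm{Cov}^{\nu_\rho}(h_i,\eta_y),
\]
a finite sum by locality of $h_i$. I aim to show that every term with $y\neq 0$ vanishes and that $\mathrm{Cov}^{\nu_\rho}(h_i,\eta_0) = \rho(1-\rho)E^{\nu_\rho}[c_{0,e_i}]$. The main tool is the telescoping identity obtained by iterating the gradient condition along the $e_i$-axis:
\[
h_i(\eta) - h_i(\tau_{Le_i}\eta) = \sum_{k=0}^{L-1} c_{ke_i,(k+1)e_i}(\eta)\,(\eta_{ke_i}-\eta_{(k+1)e_i}).
\]
Multiplying by $(\eta_y-\rho)$, taking $E^{\nu_\rho}$, and sending $L\to\infty$, the left-hand side becomes $\mathrm{Cov}^{\nu_\rho}(h_i,\eta_y)$: once $L$ is large enough that the support of $\tau_{Le_i}h_i$ is disjoint from $\{y\}$, locality of $h_i$ plus the product structure of $\nu_\rho$ make $\tau_{Le_i}h_i$ and $\eta_y$ independent, so $E^{\nu_\rho}[h_i(\tau_{Le_i}\eta)(\eta_y-\rho)]=0$. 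On the right-hand side, reversibility (4) makes each $c_{ke_i,(k+1)e_i}$ independent of the pair $(\eta_{ke_i},\eta_{(k+1)e_i})$ under $\nu_\rho$; conditioning on the remaining coordinates (under which $\eta_{ke_i}-\eta_{(k+1)e_i}$ has mean zero) kills the $k$-th term whenever $y\notin\{ke_i,(k+1)e_i\}$.

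The remaining case analysis is short. If $y$ is off the $e_i$-axis, or $y=me_i$ with $m\leq -1$, no $k\geq 0$ satisfies $y\in\{ke_i,(k+1)e_i\}$, so $\mathrm{Cov}^{\nu_\rho}(h_i,\eta_y)=0$. If $y=me_i$ with $m\geq 1$, only $k=m-1$ and $k=m$ contribute; by the same conditioning argument they evaluate respectively to $-\rho(1-\rho)E^{\nu_\rho}[c_{0,e_i}]$ and $+\rho(1-\rho)E^{\nu_\rho}[c_{0,e_i}]$ and cancel. For $y=0$, only $k=0$ contributes, and $E^{\nu_\rho}[c_{0,e_i}(\eta_0-\eta_{e_i})(\eta_0-\rho)] = \rho(1-\rho)E^{\nu_\rho}[c_{0,e_i}]$ by a one-line computation using (4). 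Combining with the reduction of the right-hand side finishes the lemma. The main obstacle is simply the bookkeeping of the case analysis, especially the exact cancellation for $y=me_i$ with $m\geq 1$; each individual step is a short application of reversibility plus Bernoulli independence.
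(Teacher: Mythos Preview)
Your proof is correct. The paper does not actually prove this lemma; it simply cites it from \cite[(1.12)]{FHU} and then, in the very next display \eqref{Pp:pos}, performs the same reduction of the right-hand side to $E^{\nu_\rho}[c_{0,e_i}]$ that you give (using assumption~(4) and the product structure of $\nu_\rho$). So there is no ``paper's proof'' to compare against; you have supplied a self-contained argument where the paper is content to quote.

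One remark on scope: your argument proves the identity \emph{under assumption~(4)} by showing that both sides equal $E^{\nu_\rho}[c_{0,e_i}]$. The Green--Kubo formula as stated in \cite{FHU} holds more generally (without the strong reversibility hypothesis that $c_{0,e_i}$ be independent of $(\eta_0,\eta_{e_i})$), and the general proof goes via a different route. Since assumption~(4) is standing throughout the present paper, your proof is entirely sufficient here, but it is worth being aware that you have proved the special case rather than the general Green--Kubo relation. The telescoping-plus-covariance computation you give is clean, and the case analysis is handled correctly; in particular the cancellation for $y=me_i$ with $m\ge 1$ and the vanishing for $m\le -1$ are both right.
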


Then, using Lemma \ref{lem:GK}, assumption (4) and \eqref{cxy:LB}, we obtain
\begin{multline} \label{Pp:pos}
  P'(\rho) 
  = \frac1{2\rho(1-\rho)} E^{\nu_\rho}[c_{0,e_i}(\eta) (\eta_0-\eta_{e_i})^2]
  = E^{\nu_\rho}[c_{0,e_i}(\eta)] \geq c_*
  \\ \text{for all $\rho \in (0,1)$ and } i = 1,\ldots,d.
\end{multline}

In addition to Assumption (6) on $P_i$, we assume on $f$ that it is bistable and $P$-balanced:
\begin{itemize}
\item[(BS)] $f$ has precisely three zeros $\a_1, \a_*, \a_2$ such that $0<\a_1< \a_*< \a_2<1$,
$f'(\a_1)<0$, $f'(\a_*)>0$ and  $f'(\a_2)<0$.  Also,
$$
A(f) := \int_{\a_1}^{\a_2} f(\rho) P'(\rho) \, d\rho=0
$$
holds.
\end{itemize}
Thanks to this assumption, \eqref{eq:RD-b} is a nonlinear Allen-Cahn equation for which the solution $\rho$ at a given time $t$ is in value close to either the phase $\alpha_1$ or the phase $\alpha_2$, except for a transition layer of width $1/\sqrt K$ between the two phases. Example \ref{exa:4.1} demonstrates that, given any exchange rates $c_{x,y}$ satisfying (1)--(6), it is possible to construct flip rates $c(\eta)$ (which only depend on the values of $\eta$ at three sites) for which the corresponding $f$ satisfies (BS).  

\begin{exa}  \label{exa:4.1}
Fix $x, y \in \Z^d$ such that $x, y, 0$ are mutually different. Let $c$ be as in \eqref{eq:c+-} with
\begin{align*} 
& c^+(\eta) = a_1 \eta_{x}\eta_{y}+ a_2 \eta_{x} + a_3 \geq 0, \\
& c^-(\eta) = b_1 \eta_{x}\eta_{y}+ b_2 \eta_{x} + b_3 \geq 0.
\end{align*}
Then,
\begin{align*} 
f(\rho) & = -(a_1 + b_1)\rho^3 + (a_1 - a_2 - b_2)\rho^2 +(a_2 - a_3 - b_3)\rho + a_3 \\
& = - (\rho - \a_1)(\rho-\a_*)(\rho-\a_2).
\notag
\end{align*}
with $0<\a_1<\a_*<\a_2<1$ if, for instance,
\begin{align*}
   a_1 + b_1 &=1, \\
   a_1 - a_2 - b_2 &= \a_1+\a_*+\a_2, \\
    a_2 - a_3 - b_3 &= -(\a_1\a_*+\a_*\a_2+\a_2\a_1), \\
a_3 &= \a_1\a_*\a_2.
\end{align*} 
In particular, $f(\rho)$ is bistable with stable points $\rho= \a_1, \a_2$
and unstable point $\a_*$.  To demonstrate how such an $f$ can be constructed, we take $a_1=1$, $a_2=0$, $b_1=0$ and
$\a_1=\frac14$, $\a_2=\frac34$.  Then, the conditions for $a_3$, $b_2$ and $b_3$ are
$a_3 = 3\a_*/16$, $b_2 = -\a_*$ and $b_3 = (13\a_*+3)/16$, where $\a_* \in (\a_1, \a_2)$ remains free to be chosen. For any such $\a_*$, it is 
 easy to check that $c^+(\eta)>0$ and $c^-(\eta)>0$.
To find $\a_*$ for which the $P$-balance condition holds, we observe from \eqref{Pp:pos} that
$A(f)>0$ if $\a_*$ is close to $\a_1$,
while $A(f)<0$ if $\a_*$ is close to $\a_2$.  Therefore, there exists an $\a_* \in (\a_1, \a_2)$
such that $A(f)=0$ holds.
\end{exa}

\subsection{Main result: the hydrodynamic limit}
\label{s:intro:MainThm}

The precise statement of the hydrodynamic limit of the process $\eta^N(t)$ generated by $L_N$ resembles \cite[Theorem 2.1]{EFHPS}. It requires some preparation.

First, we introduce a topology in which $\eta^N(t)$ can be compared to a solution $\Gamma_t$ of the curvature flow \eqref{MCF}. With this aim we construct different representations $\alpha^N(t)$ and $\chi_{\Gamma_t}$ for $\eta^N(t)$ and $\Gamma_t$ respectively. To any configuration $\eta \in \mathcal X_N$ we associate the macroscopically scaled empirical measure $\alpha^N$ on $\T^d$ defined by
\begin{align*}
& \a^N(dv;\eta) = \frac1{N^d} \sum_{x\in \T_N^d}
\eta_x \de_{x/N}(dv),\quad v \in \T^d.
\end{align*}
For the process $\eta^N(t)$, we set
\begin{align*}
& \a^N(t,dv) = \a^N(dv;\eta^N(t)), \quad t\ge 0.
\end{align*}
To any closed hypersurface $\Gamma_t$ we associate the piecewise constant function $\chi_{\Gamma_t}$ on $\T^d$ defined by
\begin{equation*}
  \chi_{\Gamma_t} (v)
  = \begin{cases}
    \alpha_1
    &\text{for $v$ on one side of $\Gamma_t$,}  \\
    \alpha_2
    &\text{for $v$ on the other side of $\Gamma_t$,}
  \end{cases}
\end{equation*}
where $\alpha_1, \alpha_2$ are the values introduced in (BS). 
The choice on the side of $\Gamma_t$ which is associated to $\alpha_1$ will be determined initially and kept fixed during the evolution.

Second, we define the sense in which the initial conditions $\eta^N(0)$ and $\Gamma_0$ have to be close to one other. For technical reasons, we cannot simply require that $\alpha^N(0)$ converges in a certain sense to $\chi_{\Gamma_0}$. Instead, the initial distribution $\mu^N$ of $\eta^N(0)$ on $\mathcal X_N$ needs to correspond to some sufficiently regular  function $u_0 : \T^d \to [0,1]$. To make this precise, we introduce further notation. To $u_0$ we associate on the microscopic scale the function 
\begin{equation} \label{u0N}
  u_0^N(x) := u_0(x/N)
  \qquad \text{for all } x \in \T_N^d
\end{equation}
and the (inhomogeneous) product measure 
\begin{equation} \label{nu0N}
  \nu_0^N := \nu_{u_0^N(\cdot)} , 
\end{equation}
where 
\begin{equation*}
  \nu_{u(\cdot)}(\eta) := \prod_{x \in \T_N^d} \nu_{u(x)} (\eta_x)
\end{equation*}
defines a product measure on $\mathcal X_N$ for any function $u : \T_N^d \to [0,1]$. 
Here, note that $\nu_{u(x)}$ is a measure on $\{0,1\}$ defined only on a single site $x$ with mean $u(x)$.
Finally, for two measures $\mu, \nu$ on $\mathcal X_N$ with $\nu$ having full support, we define the relative entropy as
$$
H(\mu|\nu) := \int_{\mathcal{X}_N} \frac{d\mu}{d\nu} \log \frac{d\mu}{d\nu} \, d\nu.
$$ 
The list of conditions on the initial data $\mu^N$ and $\Gamma_0$ are:
\begin{itemize}  
  \item[(IC1)] $\Gamma_0$ is a closed hypersurface of class $C^{5+\vartheta}$ for some $\vartheta \in (0,1)$. 
  \item[(IC2)] $H(\mu^N | \nu_0^N) = O(N^{d - \epsilon})$ as $N \to \infty$ for some $\epsilon > 0$, where $\nu_0^N$ is defined in \eqref{nu0N} in terms of some $u_0 \in C^{5}(\T^d)$ which satisfies:
  \begin{itemize}
      \item $\Gamma_0 = \{ v \in \T^d : u_0(v) = \alpha_* \}$, where $\alpha_*$ is defined in (BS),
      \item $\nabla u_0 (v) \cdot \mathbf{n}(v) \neq 0$ at any $v \in \Gamma_0$, 
      where $\mathbf{n}(v)$ is the normal direction to $\Ga_0$ at $v$,
      and
      \item there exist $0 < u_- < u_+ < 1$ such that $u_- \leq u_0(v) \leq u_+$ for all $v \in \T^d$.
    \end{itemize}  
\end{itemize}

Third, given an initial distribution $\mu^N$, we denote by $\mathbb P^{\mu^N}$ and $\E^{\mu^N}$ the process measure and expectation with respect to $\eta^N(\cdot)$.

\begin{thm} [Hydrodynamic limit] \label{thm:main}
Let $d \geq 2$. Let $\Gamma_t$ be the solution of \eqref{MCF} with initial condition $\Gamma_0$, and let $T > 0$ be such that $\Gamma_t$ is smooth for all $0 < t \leq T$. Let $L_N$ be as defined in Section \ref{s:intro:GK} under assumptions {\rm (1)--(6)} and {\rm (BS)}, and let $\eta^N(t)$ be the process generated by $L_N$ with initial distribution $\mu^N$. Assume that $\mu^N$ and $\Gamma_0$ satisfy {\rm (IC1)} and {\rm (IC2)}. Then, there exist constants $\sigma \in (0,1)$ and $\delta > 0$ such that if $K = K(N) \to \infty$ as $N \to \infty$ with $K \leq \delta (\log N)^{\sigma/2}$, then
\begin{equation*} 
   \lim_{N \to \infty} \mathbb P^{\mu^N} \big( \big| \langle \alpha^N(t), \phi \rangle - \langle \chi_{\Gamma_t}, \phi \rangle \big| > \e \big) = 0
 \end{equation*} 
 for all $t \in (0, T]$, all $\e > 0$ and all $\phi \in C^\infty(\T^d)$.
\end{thm}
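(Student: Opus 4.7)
The strategy is two-tier, following \cite{FT} and \cite{EFHPS}. First I would compare the law of $\eta^N(t)$ to the inhomogeneous product Bernoulli measure whose local density profile is a smooth solution $\rho^\e$ of \eqref{eq:RD-b} with $\e := 1/\sqrt K$, controlling the relative entropy between them to order $o(N^d)$. Then I would invoke the sharp-interface asymptotics of the Allen-Cahn PDE from the Appendix of \cite{EFHPS-2} (which under (BS) and (6) gives $\rho^\e(t,\cdot) \to \chi_{\Gamma_t}$ in $L^1(\T^d)$ off $\Gamma_t$ as $\e \to 0$, with $\Gamma_t$ evolving by \eqref{MCF}) to transfer this into the desired statement.

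Concretely, let $\rho^\e$ be the classical solution of \eqref{eq:RD-b} on $[0,T]\times\T^d$ with initial datum $u_0$ from (IC2), and set $\nu^N_t := \nu_{\rho^\e(t,\cdot/N)}$ in the sense of \eqref{nu0N}. Denoting by $\mu^N_t$ the law of $\eta^N(t)$ and $h_N(t) := H(\mu^N_t \mid \nu^N_t)$, we have $h_N(0) = O(N^{d-\epsilon})$ by (IC2). Yau's inequality for a non-equilibrium reference measure gives
\[
\frac{d}{dt} h_N(t) \leq \int \Big( L_N^* \mathbf{1} - \partial_t \log \psi^N_t \Big) \, d\mu^N_t - N^2 D_K(\mu^N_t),
\]
where $\psi^N_t$ is the Radon-Nikodym density of $\nu^N_t$ with respect to a fixed Bernoulli reference measure and $D_K$ is the Kawasaki Dirichlet form. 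Telescoping the microscopic current via the gradient condition (5), Taylor expanding $\log\psi^N_t$ in $\rho^\e$, and using \eqref{eq:P}--\eqref{eq:f} together with the fact that $\rho^\e$ solves \eqref{eq:RD-b} cancels the leading contributions and leaves error terms of the form
\[
\sum_{x \in \T_N^d} J^\e(t,x/N) \, \tau_x V(\eta),
\]
with $V$ a local function such that $E^{\nu_\rho}[V] = 0$ for every $\rho \in [0,1]$, and $J^\e$ a smooth, possibly $\e$-singular, test function built from derivatives of $\rho^\e$.

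The key step, and the main novelty, is a Boltzmann-Gibbs principle that controls such sums by $\de\, N^2 D_K(\mu^N_t) + o(N^d)$ uniformly over the class of local functions $V$ produced by (1)--(5). Once this replacement is in place, the Kawasaki dissipation absorbs the error and a Gronwall argument yields $h_N(t) = o(N^d)$. Applying the standard entropy inequality to the test functionals $\exp\bigl(\pm \la N^d \bigl(\lan \a^N(t),\phi\ran - \lan \rho^\e(t,\cdot),\phi\ran\bigr)\bigr)$ with $\la$ small converts this into convergence in probability of $\lan \a^N(t),\phi\ran$ to $\int \rho^\e(t,v)\phi(v)\,dv$; combining with the sharp-interface limit $\rho^\e(t,\cdot) \to \chi_{\Gamma_t}$ finishes the proof.

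The hardest part will be the Boltzmann-Gibbs principle itself. In \cite{FT} one has $c_{x,y} \equiv 1$, so the canonical equilibria of $L_K$ on a cube with fixed particle number are the SSEP uniform distributions, for which a sharp spectral gap is classical. Here the rates depend on the surrounding configuration through a finite-range local function, and a Green-Kubo-type correction in the spirit of Lemma \ref{lem:GK} must be built into the replacement. Assumptions (1)--(4) provide reversibility under $\{\nu_\rho\}$, a uniform lower bound $c_{x,y} \geq c_*$ via \eqref{cxy:LB}, and finite range, which are precisely what is needed to obtain a quantitative spectral gap for $L_K$ restricted to a microscopic cube at fixed particle number. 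Combining this spectral gap with a one-block/two-block decomposition of $\tau_x V$ and an equivalence-of-ensembles estimate should yield the desired replacement; matching its quantitative rate against the $K$-speedup of the Glauber term is what pins down the admissible range $K \leq \de(\log N)^{\si/2}$.
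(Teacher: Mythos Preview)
Your two-tier architecture (relative entropy against a product Bernoulli reference tracking the PDE, then sharp-interface limit) is the right one, and you correctly identify the Boltzmann--Gibbs principle as the new ingredient. But the paper's implementation differs from yours in two linked ways, and one of your claims is not correct as stated.

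\textbf{Reference profile and the PDE cancellation.} The paper does not use the continuous solution $\rho^\e$ of \eqref{eq:RD-b}; it uses the solution $u^N(t,x)$ of the \emph{discretized} equation \eqref{eq:HD-discre} and sets $\nu_t^N=\nu_{u^N(t,\cdot)}$. More importantly, the order of operations is reversed relative to what you write. One does \emph{not} first cancel leading terms via the PDE and then apply Boltzmann--Gibbs to a translation-invariant remainder $V$ with $E^{\nu_\rho}[V]=0$ for every $\rho$. The raw integrand $L_N^{*,\nu_t^N}\mathbf 1-\partial_t\log\psi_t$ (Corollary~\ref{Cor:2.6}) is a sum $\sum_x a_{t,x}\,\tau_x h$ where the local functions $h$ (built from $h_i$, $c_{0,e_i}$, $c^\pm$) are \emph{not} mean-zero. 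The Boltzmann--Gibbs principle (Theorem~\ref{BG}) replaces each $\tau_x h$ by its \emph{affine} approximation $\tilde h(u_{t,x})+\tilde h'(u_{t,x})(\eta_x-u_{t,x})$; only then do the surviving linear-in-$\bar\eta_x$ terms assemble into $\sum_x[\Delta^N P(u^N)+Kf(u^N)-\partial_t u^N]\bar\eta_x/\chi(u^N)$, which vanishes \emph{identically} by the discrete PDE. The remainder handled by Theorem~\ref{BG} is the $x$-dependent function $f_{t,x}$ of \eqref{f_tx eq}, satisfying $E^{\nu_\rho}[f_{t,x}]=O((\rho-u_{t,x})^2)$ only at the moving local density, not for all $\rho$; this second-order vanishing is exactly what the equivalence-of-ensembles step (Lemma~\ref{l:GJ13:P31}) exploits. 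A single translation-invariant $V$ with $E^{\nu_\rho}[V]=0$ for all $\rho$ does not exist here.

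\textbf{Closure mechanism.} You propose to control the replacement error by $\de N^2 D_K(\mu_t^N)+o(N^d)$ and absorb it into the Kawasaki dissipation. The paper instead drops the Dirichlet form in Yau's inequality via $\mathcal D_N\ge 0$; Theorem~\ref{BG} then bounds the replacement cost by $CK^\theta\int_0^T H(\mu_s^N|\nu_s^N)\,ds+CN^{d-\epsilon_1}$ (the Dirichlet form enters only inside its proof, through Feynman--Kac and a Rayleigh bound against the SSEP gap on a box, using $c_{x,y}\ge c_*$). Gronwall is applied directly to the entropy, and the restriction $K\le\de(\log N)^{\sigma/2}$ comes from balancing the Gronwall factor $e^{CK^{2/\sigma}T}$ against the $N^{d-\epsilon}$ seed (the exponent $\sigma$ being the H\"older exponent from the discrete Schauder estimates in \cite{FS} for $\nabla^N u^N,\Delta^N u^N$). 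There is no Green--Kubo ``correction'' built into the replacement; Lemma~\ref{lem:GK} is used only algebraically, to rewrite $E^{\nu_\rho}[c_{0,e_i}]=P'(\rho)$ when simplifying the post-replacement linear terms.
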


We remark that, while here the constant $\sigma \in (0,1)$ is implicit, it is a H\"older exponent determined by a Nash estimate in \cite{FS}, which only involves $d, P, f$;
see Section \ref{s:H:uN:bds}.

\subsection{Proof outline for the hydrodynamic limit}
\label{s:intro:BG}

The proof of Theorem \ref{thm:main} is divided into several steps. Most of these steps are standard, proven in \cite{EFHPS} or proven elsewhere. The new step is the proof of a Boltzmann-Gibbs principle (Theorem \ref{BG}). In Section \ref{s:intro:BG:mot} we demonstrate how this Boltzmann-Gibbs principle is the essential new result on which Theorem \ref{thm:main} is built. Then, in Section \ref{s:intro:BG:pf} we summarize how Theorem \ref{thm:main} follows from this Boltzmann-Gibbs principle, and focus on the comparison of our proof to that  in \cite{EFHPS}.

\subsubsection{Narrowing down to a Boltzmann-Gibbs principle}
\label{s:intro:BG:mot}

As remarked in the introduction, the main result \cite[Theorem 2.1]{EFHPS} is the counterpart of Theorem \ref{thm:main} when the Kawasaki dynamics are replaced with the zero-range process. In particular, the limiting equation \eqref{MCF} is of the same form. In fact, we can use a similar breakdown of the proof of Theorem \ref{thm:main} as for the proof of \cite[Theorem 2.1]{EFHPS}. 
  
As in \cite{EFHPS}, let $u^N(t) = \{u^N(t,x)\in [0,1]\}_{x\in \T_N^d}$
be the solution of the discretized version of \eqref{eq:RD-b} given by
\begin{equation} \label{eq:HD-discre}
\partial_t u^N(t,x) =  \De^N P(u^N(t))(x) + K f(u^N(t,x))
\end{equation}
with initial condition $u^N(0,x) = u_0^N(x)$ (recall \eqref{u0N}), where 
\begin{align*}
  \De^N u(x) 
  &:= N^2 \sum_{i=1}^d\Big(u(x+e_i) + u(x-e_i) - 2u(x)\Big) 
  = N \sum_{y : |y-x| = 1} \nabla_{yx}^N u, \\
  \nabla_{xy}^N u &:= N \nabla_{xy} u, \quad 
  \nabla_{xy}u := u(x) - u(y).
\end{align*} 
For later use, we also set
\[
  \nabla^N u(x) := \{ \nabla_{x+e_i, x}^N u \}_{i=1}^d = \{N(u(x+e_i)-u(x))\}_{i=1}^d.
\] 
We interpret $u^N(t,x)$ as an approximation of $\E^{\mu^N} [\eta_x^N(t)]$.

It is shown in \cite{EFHPS} (see also Section \ref{s:H:uN:bds}) that the bounds in (IC2) imply $0 < u^N(t,x) < 1$ for any $t\in[0,T]$ and any $x\in\T_N^d$. Then, similar to \eqref{nu0N}, we can take
\begin{equation*} 
  \nu_t^N := \nu_{u^N(t, \cdot)}.
\end{equation*}
Moreover, \cite[Theorem 2.3]{EFHPS} gives a precise statement on the pointwise convergence of $u^N(t)$ (when interpreted as a piecewise constant function on the macroscopic domain $\T^d$) to $\chi_{\Gamma_t}$ as $N \to \infty$. Then, the argument used in \cite[Section 2.3]{EFHPS} narrows the proof of Theorem \ref{thm:main} down to the entropy estimate
\begin{equation} \label{H:est:intro}
  H(\mu_t^N | \nu_t^N) = o(N^d)  
  \quad \text{as $N \to \infty$ for all $t \in (0,T]$},  
\end{equation}
where $\mu_t^N$ is the law of the process $\eta^N(t)$. 

Establishing an entropy estimate of the type \eqref{H:est:intro} is of interest beyond the proof of Theorem \ref{thm:main} to the study of fluctuations around the hydrodynamic limit; see e.g.\ \cite{JM1,JM2}. Therefore, we give a precise statement for this estimate (Theorem \ref{thm:EstHent}, although not as sharp as in \cite{JM1,JM2}), and state it in more generality than required for the proof of Theorem \ref{thm:main}.

\begin{thm}[Entropy estimate] \label{thm:EstHent}
Let $d \geq 1$, $T > 0$ and $K = K(N)$ be increasing in $N$. Let $\eta^N(t)$ be as in Theorem \ref{thm:main}. Let $\mu_t^N$, $u^N(t,x)$ and $\nu_t^N$ be as above. If there exist constants $C_0, C_1, \epsilon > 0$ and $0<u_-<u_+<1$ such that for all $N$ large enough
\begin{enumerate} [label=\rm{(\roman*)}]
  \item \label{thm:EstHent:u:LBUB} $u_- \le u^N(0,x) \le u_+$
for all $x\in \T_N^d$, 
  \item \label{thm:EstHent:gradu:UB} $|\nabla_{y_1,x}^N \cdots \nabla_{y_k,x}^N u^N(0,x)|
\le C_0$ for all $1\le k \le 4$ and all $x, y_1,\ldots, y_4 \in \T_N^d$ such that $|y_k-x|=1$ for all $1\le k \le 4$, and
  \item $H(\mu^N|\nu_0^N) \leq C_1 N^{d-\epsilon}$,
\end{enumerate} 
then there exist $\delta > 0$ and $\sigma \in (0,1)$ such that if $1\le K \le \de (\log N)^{\si/2}$ for all $N$ large enough, then
$$
H(\mu_t^N|\nu_t^N) = o(N^d) \quad \text{as $N\to\infty$ for all } t\in [0,T].
$$
\end{thm}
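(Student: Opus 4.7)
The plan is to run Yau's relative entropy argument:
\begin{equation*}
\partial_t H(\mu_t^N \,|\, \nu_t^N) \le - D_N(\sqrt{f_t^N};\nu_t^N) + \int V_t^N\, d\mu_t^N,
\end{equation*}
where $f_t^N = d\mu_t^N/d\nu_t^N$, $D_N$ is the Dirichlet form of $L_N$ under $\nu_t^N$, and $V_t^N := (L_N^* \nu_t^N - \partial_t \nu_t^N)/\nu_t^N$. The goal is to bound
\begin{equation*}
\int V_t^N\, d\mu_t^N \le C(K)\, H(\mu_t^N | \nu_t^N) + o(N^d);
\end{equation*}
Gronwall combined with the hypothesis $H(\mu^N|\nu_0^N) \le C_1 N^{d-\epsilon}$ then yields the claim, with the ceiling $K \le \delta(\log N)^{\sigma/2}$ arising exactly from requiring that $e^{C(K) T}\, N^{d-\epsilon} = o(N^d)$.

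The computation of $V_t^N$ is done site-by-site: since $\nu_t^N$ is the inhomogeneous product of Bernoulli measures $\nu_{u^N(t,x)}$, each ratio $\nu_t^N(\eta^{x,y})/\nu_t^N(\eta)$ and $\nu_t^N(\eta^x)/\nu_t^N(\eta)$ factorises and may be Taylor-expanded in $\nabla^N u^N / N$. The Taylor remainders are controlled by the gradient bound \ref{thm:EstHent:gradu:UB} (propagated in time via \cite{FS}) and the prefactors $[u^N(1-u^N)]^{-1}$ are bounded via \ref{thm:EstHent:u:LBUB} together with the comparison principle for \eqref{eq:HD-discre}. The gradient condition (5) followed by summation by parts rewrites the $N^2 L_K$ contribution to $V_t^N$ as a sum over sites of the currents $\tau_x h_i(\eta)$ against a discrete second derivative of a function of $u^N$, while the $K L_G$ contribution produces sums of $\tau_x c(\eta)$ against the centered observable $(\eta_x - u^N)/[u^N(1-u^N)]$. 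Because $u^N$ solves \eqref{eq:HD-discre}, the deterministic pieces $\De^N P(u^N) + Kf(u^N)$ cancel against $\partial_t \log \nu_t^N$, leaving a residual of the schematic form
\begin{equation*}
V_t^N(\eta) \approx \sum_{i,x} G_i^N(t,x)\bigl\{\tau_x h_i(\eta) - P(u^N(t,x))\bigr\} + K \sum_x H^N(t,x)\bigl\{\tau_x c(\eta) - E^{\nu_{u^N(t,x)}}[c]\bigr\}
\end{equation*}
together with higher-order Taylor remainders that aggregate to $o(N^d)$.

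These residuals are exactly sums of centered local functions against smooth coefficients, which is the setting of the Boltzmann-Gibbs principle (Theorem \ref{BG}). Applying it together with the entropy inequality $\int F\, d\mu \le \gamma^{-1}(H(\mu|\nu) + \log E^\nu[e^{\gamma F}])$, and controlling the log moment generating function by Gaussian-type concentration for centered local functions under product Bernoulli measures, yields an inequality of the form $\partial_t H(\mu_t^N|\nu_t^N) \le C K^\beta H(\mu_t^N|\nu_t^N) + o(N^d)$ which closes by Gronwall. The main obstacle is the interplay between this replacement and the $K\to\infty$ scaling: unlike the simple-exclusion case \cite{FT} where $h_i(\eta) = \eta_0$ and the Kawasaki cancellation is immediate, the speed change makes $\tau_x h_i(\eta) - P(u^N)$ a genuinely non-trivial centered local function, and one must organise the Taylor expansion so that every residual fits the Boltzmann-Gibbs template \emph{and} so that the Dirichlet-form error generated by the replacement is absorbed by $D_N(\sqrt{f_t^N};\nu_t^N)$ on the left of Yau's inequality, all while tracking $K$-dependence sharply enough to produce the stated logarithmic ceiling. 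The remaining ingredients (regularity of $u^N$ from \cite{FS}, Gaussian concentration, Gronwall closure) are essentially standard once the Boltzmann-Gibbs step has been set up in the correct form.
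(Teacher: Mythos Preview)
Your proposal is essentially the same strategy the paper uses: Yau's entropy inequality, explicit computation of $L_N^{*,\nu_t^N}\mathbf 1 - \partial_t\log\psi_t$, application of the Boltzmann--Gibbs principle (Theorem~\ref{BG}) to reduce each local-function term to an affine function of $\eta_x$, cancellation of the leading pieces via the discrete PDE \eqref{eq:HD-discre}, and a Gronwall closure under the logarithmic bound on $K$.

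Two small corrections to how you describe the mechanics. First, the paper \emph{discards} the Dirichlet form in Yau's inequality (it just uses $\mathcal D_N\ge 0$); the Boltzmann--Gibbs principle is proved as a self-contained estimate via Feynman--Kac, which generates its \emph{own} Dirichlet form (for SSEP, with respect to the homogeneous measure $\nu_{1/2}$). Trying to absorb the replacement cost into the Yau Dirichlet form would run into the mismatch between $\nu_t^N$ and $\nu_{1/2}$, and is in any case unnecessary. Second, because the Boltzmann--Gibbs estimate is on a time-integrated quantity, the paper integrates $\partial_t H$ over $[0,T_1]$ \emph{before} invoking Theorem~\ref{BG}, obtaining an integral Gronwall inequality $H(\mu_{T_1}^N|\nu_{T_1}^N)\le CN^{d-\epsilon}+CK^{2/\sigma}\int_0^{T_1}H(\mu_t^N|\nu_t^N)\,dt$ rather than a pointwise differential one; the order of operations matters here. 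Also note that the cancellation $\Delta^N P(u^N)+Kf(u^N)=\partial_t u^N$ only appears \emph{after} Boltzmann--Gibbs has replaced the local functions by their ensemble averages, not before.
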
 

We comment on the relaxation of conditions (IC1) and (IC2). (IC1) is not required in Theorem \ref{thm:EstHent}.
The new conditions (i)--(iii) resemble those of (IC2), but are weaker, because $u_0^N$ need not be related by \eqref{u0N} to some regular function $u_0$. 

We prove Theorem \ref{thm:EstHent} in Section \ref{s:H}. In this proof we use Yau's relative entropy method \cite{Y} to compute and bound from above the entropy production $\frac d{dt} H(\mu_t^N|\nu_t^N)$, and then conclude by applying Gronwall's lemma. In this computation, terms of the type
\begin{equation} \label{need:for:BG}
  \E^{\mu^N} \bigg[\int_0^T \sum_{x \in \T_N^d} a_{t,x} \tau_x h(\eta(t)) dt \bigg]
\end{equation}
appear, where we recall that $\E^{\mu^N}$ is the expectation with respect to the process $\eta^N(\cdot)$ with initial distribution $\mu^N$, and where $h$ is a local function on $\mathcal X_N$ which may be different for each term of type \eqref{need:for:BG}. Each function $h$ does not depend on $t,x,N$; it is expressed in terms of the flip rate $c$, one of the exchange rates $c_{0,e_i}$, or one of the functions $h_i$ from the gradient condition (5). The coefficients $a_{t,x}$ may also differ from term to term. They are deterministic and explicitly expressed in terms of $u^N(t,x)$. Hence, $a_{t,x} =a_{t,x}(N)$ may depend on $N$. The difficulty in dealing with \eqref{need:for:BG} is that we have little control over $\mu_t^N$. We overcome this issue by establishing a Boltzmann-Gibbs principle, which allows us to replace $\tau_x h(\eta(t))$ by averages over $h$ such that the only randomness left is through an affine dependence on $\eta_x(t)$. The cost of this replacement is an entropy term and a deterministic error term, which are sufficiently small for the application of Gronwall's lemma. 

Next we prepare to state the Boltzmann-Gibbs principle, Theorem \ref{BG}. As for Theorem \ref{thm:EstHent}, the Boltzmann-Gibbs principle is also of interest in the study of fluctuations around the hydrodynamic limit; see e.g.\ the book chapters \cite[{Chapter} 7]{KLO} 
and \cite[{Chapter} 11]{KL}.
Therefore, we state a new {\it time-dependent} version in terms of a parameter $\Theta$, which is more general than strictly required for the proof of Theorem \ref{thm:main}. 

Let $K = K(N) \geq 1$ be an increasing function. Instead of a solution $u^N(t,x)$ to \eqref{eq:HD-discre}, we consider deterministic values $u_{t,x} = u_{t,x}(N)$ which satisfy the uniform bound in Theorem \ref{thm:EstHent}\ref{thm:EstHent:u:LBUB}  uniformly in $t$ and, for some constant $\Theta \in [0,1)$, the bound on the gradient 
\begin{equation}  \label{eq:nablau}
|u_{t,y} - u_{t,x}| \le C \frac{K^{1/\si}}{N [t(T-t)]^\Theta}
\end{equation} 
for all  $t \in [0,T]$, all  $x,y \in \T_N^d$  with  $|x-y| = 1$ and all $N$ large enough. Let $\nu_t^N$ be the Bernoulli measure with mean $\{ u_{t,x} \}_{x \in \T_N^d}$.  For a local function $h=h(\eta)$ with support in a finite square box 
$\Lambda_h\subset \T_N^d$, let  
\begin{align}
\label{f_tx eq}
  f_{t,x}(\eta) := \tau_xh(\eta) -\tilde h(u_{t,x}) -\tilde h'(u_{t,x})\big(\eta_x - u_{t,x}\big),
\end{align}
where
\begin{align}
\label{ht} 
  \tilde h(\beta) := E^{\nu_\beta}[h]
\end{align} 
for any $\beta\in [0,1]$; recall that $\nu_\b$ is the Bernoulli measure with mean $\b$.
Let $\mu^N=\mu_0^N$ be an initial distribution on $\mathcal X_N$, and $\mu^N_t$ be the 
distribution of $\eta^N(t)$ generated by $L_N=N^2L_K+KL_G$.

\begin{thm}[Boltzmann-Gibbs principle]
\label{BG}
Let $d \geq 1$ and $T > 0$. Let $\eta^N(t)$ be as in Theorem \ref{thm:main}. Let $\mu_t^N$ be the law of $\eta^N(t)$.
Let $K$, $\Theta$, $u_{t,x}$, $\nu_t^N$, $h$ and $f_{t,x}$ be as above. Let $a_{t,x} = a_{t,x}(N) \in \R$ for $t \in (0,T)$ and $x \in \T_N^d$ be deterministic coefficients which satisfy
\begin{equation} \label{BG:assn:atx}  
  \sup_{x \in \T_N^d} |a_{t,x}| \leq C \frac{ K^\theta }{ [t (T-t)]^\Theta } \quad \text{for all } t \in (0,T)
\end{equation}
for some constant $\theta \geq 0$.
Then there exist  $C, \epsilon_0, \epsilon_1 >0$ such that for all $N$ large enough, if $K = O (N^{\epsilon_0})$ as $N \to \infty$, then
\begin{equation} \label{BG:est}
\E^{\mu^N} \left |\int_0^T \sum_{x \in \T_N^d} a_{t,x} f_{t,x} dt\right | 
\leq C \bigg( K^\theta \int_0^T  H(\mu_t^N| \nu_t^N) \frac{dt}{ [t(T-t)]^\Theta } + N^{d-\epsilon_1} \bigg).
\end{equation}
\end{thm}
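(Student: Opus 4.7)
The plan starts from the entropy inequality $\int G\,d\mu_t^N \le \gamma^{-1}\bigl(H(\mu_t^N|\nu_t^N) + \log \int e^{\gamma G}\,d\nu_t^N\bigr)$. I apply this pointwise in $t$ to $G_t = \pm \sum_x a_{t,x} f_{t,x}$, with $\gamma = \gamma_t := c_0[t(T-t)]^\kappa / K^\theta$ for a small constant $c_0>0$ to be chosen. The $H$-term then reproduces exactly the entropy contribution on the right of \eqref{BG:est} after time integration, so it remains to prove
$$\int_0^T \frac{1}{\gamma_t} \log E^{\nu_t^N}\bigl[e^{\pm \gamma_t \sum_x a_{t,x} f_{t,x}}\bigr]\, dt \le C N^{d-\epsilon_1}.$$
By \eqref{BG:assn:atx}, $\gamma_t|a_{t,x}| \le Cc_0$ is uniformly bounded, so the task reduces to an exponential moment of a bounded-coefficient linear combination of the $f_{t,x}$'s under the product Bernoulli measure $\nu_t^N$.

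The defining feature of $f_{t,x}$ is that it subtracts from $\tau_x h$ the Taylor expansion of its mean around $u_{t,x}$ to first order in $\eta_x - u_{t,x}$. Hence, under $\nu_{u_{t,x}}$, $f_{t,x}$ is centered and orthogonal to $\eta_x - u_{t,x}$: a \emph{second-order} fluctuation that lies essentially in the range of the symmetric Kawasaki generator $L_K$ on any sufficiently large local box. To exploit this I partition $\T_N^d$ into disjoint mesoscopic boxes $\{B_j\}$ of side $\ell = \ell(N)$ with $1\ll \ell\ll N$, and perform a one-block/two-block reduction on each box. Step one replaces $\tau_x h$ by $\tilde h(\bar\eta^{B_j})$ with $\bar\eta^{B_j}:=|B_j|^{-1}\sum_{y\in B_j}\eta_y$; the error, controlled by the equivalence of ensembles together with the spectral gap $\gtrsim \ell^{-2}$ of $L_K$ restricted to $B_j$ and amplified by the $N^2$ prefactor in $L_N$, contributes order $N^{d-2}\ell^{2}$ after time integration. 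Step two Taylor-expands $\tilde h(\bar\eta^{B_j}) - \tilde h(u_{t,x}) = \tilde h'(u_{t,x})(\bar\eta^{B_j} - u_{t,x}) + O\bigl((\bar\eta^{B_j}-u_{t,x})^2\bigr)$; the quadratic remainder is pointwise of size $\ell^{-d} + (\ell K^{1/\sigma}/(N[t(T-t)]^\kappa))^2$ by \eqref{eq:nablau} and a local CLT for $\bar\eta^{B_j}$. Finally the first-order piece combines with $-\tilde h'(u_{t,x})(\eta_x - u_{t,x})$ from $f_{t,x}$ into $\tilde h'(u_{t,x})(\bar\eta^{B_j} - \eta_x)$, whose sum over $x\in B_j$ telescopes up to a boundary of size $O(\ell^{d-1})$ per box.

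Assembling the pieces, the exponential moment factorizes over the $N^d/\ell^d$ disjoint boxes up to boundary interactions (by the product structure of $\nu_t^N$), and each factor is estimated using $\log E[e^X] \le E[X] + CE[X^2]$ for bounded $X$. Balancing the four error contributions, of orders $N^{d-2}\ell^{2}$, $N^d \ell^{-d}$, $N^d \ell^{2} K^{2/\sigma} N^{-2}$ and $N^d/\ell$, by choosing $\ell = N^{a}$ for a suitable $a \in (0,1)$, and using that the time integral absorbs $[t(T-t)]^{-\kappa}$ since $\kappa < 1$, produces a bound of order $N^{d-\epsilon_1}$ for some $\epsilon_1>0$, provided $\epsilon_0$ is small enough that $K = O(N^{\epsilon_0})$ does not spoil any of the resulting powers of $N$.

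The main obstacles are (i) the exponential-moment rather than $L^2$ form of the estimate, which requires the box-factorization argument above in place of a direct Kipnis--Varadhan bound; (ii) the absence of regularity of $a_{t,x}$ in $x$, which forces the first-order cross-term cancellation to be produced by a pointwise telescoping within each box rather than by a summation-by-parts on the coefficient; and (iii) obtaining equivalence-of-ensembles and spectral-gap estimates uniformly over the class of local $h$'s that will later arise from $c$, $c_{0,e_i}$ and the gradient-condition functions $h_i$, rather than for a single fixed $h$.
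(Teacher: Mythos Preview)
Your proposal has a genuine gap at the very first step. By applying the entropy inequality \emph{pointwise in $t$} with respect to $\nu_t^N$, you reduce everything to bounding a \emph{static} exponential moment
\[
  \frac{1}{\gamma_t}\log E^{\nu_t^N}\!\Big[\exp\Big(\pm\gamma_t\sum_x a_{t,x}f_{t,x}\Big)\Big]
\]
for each fixed $t$. At that point the dynamics has disappeared: the integral is over a product Bernoulli measure, and neither the generator $L_N$ nor its $N^2$ prefactor nor the spectral gap of $L_K$ restricted to a box can enter the estimate. Yet in Step~one you invoke precisely these dynamical ingredients to claim that the replacement error $\tau_x h-\tilde h(\bar\eta^{B_j})$ contributes $O(N^{d-2}\ell^2)$. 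That bound is the Kipnis--Varadhan/Rayleigh estimate, and it requires the time integral \emph{inside} the exponential (via Feynman--Kac) to convert the fluctuation into an $H_{-1}$-norm controlled by the inverse spectral gap. Statically, $\tau_x h-\tilde h(\bar\eta^{B_j})$ has variance of order~$1$ per site under $\nu_t^N$, so the exponential moment is of order $\gamma_t N^d$, which after division by $\gamma_t$ gives $N^d$, not $N^{d-\epsilon_1}$. A secondary issue is Step~three: $\sum_{x\in B_j}a_{t,x}\tilde h'(u_{t,x})(\bar\eta^{B_j}-\eta_x)$ does \emph{not} telescope, since the coefficients $a_{t,x}$ have no regularity in $x$ whatsoever; it is only the sum $\sum_{x\in B_j}(\bar\eta^{B_j}-\eta_x)$ with constant coefficients that vanishes.

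The paper's proof separates the two difficulties before applying any entropy inequality. It writes $f_x=m_x+E^{\nu_{1/2}}[f_x\mid\eta_x^\ell]$. The fluctuation $m_x$ is handled via entropy with respect to the \emph{time-independent} reference $\nu_{1/2}$, which leaves the full time integral inside the exponential; Feynman--Kac then produces the Dirichlet form $N^2\langle -L_K\sqrt g,\sqrt g\rangle$, and the Rayleigh bound with the spectral gap $\gtrsim\ell^{-2}$ gives the claimed $N^{d-2}\ell^{d+2}$-type error. Only the conditional-expectation piece, which by equivalence of ensembles is already pointwise small (of order $(\eta_x^\ell-u_x)^2+\ell^{-d}$), is treated by a pointwise-in-$t$ entropy inequality with respect to $\nu_t^N$; this is where the $H(\mu_t^N|\nu_t^N)$ term actually arises. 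In short, the order of operations matters: the dynamics must be used \emph{before} the time integral is discarded, and your plan does the opposite.
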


We prove Theorem \ref{BG} in Section \ref{s:BG:pf}. Our proof gives an explicit construction for the constants $\epsilon_0$ and $\epsilon_1$.

When we apply Theorem \ref{BG} in the proof of Theorem \ref{thm:EstHent}, we do not need the absolute value signs in \eqref{BG:est}. Also, we only apply Theorem \ref{BG} with $\Theta = 0$. Then, the term $[t(T-t)]^\Theta$ disappears from the denominator in \eqref{BG:est}, and it is possible to select larger values for $\epsilon_0$ and $\epsilon_1$.

\subsubsection{Outline of the proofs of Theorems \ref{thm:main}, \ref{thm:EstHent} and \ref{BG}}
\label{s:intro:BG:pf}

We have argued that the key step in the proof of our main Theorem \ref{thm:main} is the Boltzmann-Gibbs principle in Theorem \ref{BG}. Here, we give a more precise overview of the proof, and focus on the comparison with \cite{EFHPS}.

Given the entropy estimate in Theorem \ref{thm:EstHent}, the proof of Theorem \ref{thm:main} is given in \cite{EFHPS}. Indeed, as shown in \cite[Section 2.3]{EFHPS}, the convergence in probability stated in Theorem \ref{thm:main} is a direct consequence of:
\begin{itemize}
   \item the approximation of the expectation of the process $\eta^N(t)$ by the discrete density $u^N$ (see Theorem \ref{thm:EstHent}), and
   \item the convergence of $u^N(t)$ to $\chi_{\Gamma_t}$ as $N \to \infty$ (see \cite[Theorem 2.3]{EFHPS} for a precise statement).
 \end{itemize}   
This concludes the proof of Theorem \ref{thm:main} in terms of Theorem \ref{thm:EstHent}.
\smallskip

The outline of the proof of Theorem \ref{thm:EstHent} is briefly given above in the paragraph which contains \eqref{need:for:BG}. Here, we provide more details on the computation (and estimate from above) of $\frac d{dt} H(\mu_t^N|\nu_t^N)$. The full computation is done in Section \ref{s:H}. The goal of the computation is to bound $\frac d{dt} H(\mu_t^N|\nu_t^N)$ from above by a sum of several terms depending on $u^N$, and to collect the terms which are asymptotically the largest in $N$. For these terms to cancel each other out, it is necessary that the density $u^N$ satisfies Equation \eqref{eq:HD-discre}. To control the remaining terms, which are of higher order, we need sufficient bounds on the first and second order gradients of $u^N(t)$. We obtain these bounds ultimately from Assumption \ref{thm:EstHent:gradu:UB} in Theorem \ref{thm:EstHent}. 

Also in \cite{EFHPS} the entropy production $\frac d{dt} H(\mu_t^N|\nu_t^N)$ is computed. The computation is similar for the terms related to $\partial_t u^N$ and $K f(u^N)$, but the most involved part, i.e.\ the computation of the nonlinear diffusion term $\Delta^N P(u^N)$, is completely different. Indeed, we strongly rely on the assumptions (2),(4),(5) on the exchange rates $c_{x,y}(\eta)$ to rewrite the contribution of $L_K$ to the entropy production as a sum of terms of the form \eqref{need:for:BG}. 
\smallskip

Finally, we turn to the proof of Theorem \ref{BG}. The first step is the same as in \cite{EFHPS}: we replace $f_{t,x}$ in \eqref{BG:est} by the conditional expectation with respect to a local average over $\eta$ around $x$, and estimate separately the error made by this replacement. We estimate this error in a similar manner as in \cite[Section 7]{EFHPS} (see Lemma \ref{l:L10:6} below), but with several modifications to account for the Kawasaki process. However, for the term involving the conditional expectation, the finite-range particle interactions of the Kawasaki process requires a different approach; see Lemma \ref{l:L10:9} below. The proof of this lemma is not based on \cite{EFHPS}. Instead, it is a generalization of the argument used in \cite{FT}, which was originally designed for the simple Kawasaki process.

\subsection{Discussion}
\label{s:intro:disc}

\paragraph{On the entropy estimate.} Also the entropy estimate, Theorem \ref{thm:EstHent}, is stated in higher generality. Indeed, $u^N$ need not be defined through a macroscopic profile as in \eqref{u0N}, and its discrete derivatives need not be uniformly bounded in $N$; see Theorem \ref{thm:EstHent}\ref{thm:EstHent:gradu:UB}. Yet, we expect that a weaker bound than that in Theorem \ref{thm:EstHent}\ref{thm:EstHent:gradu:UB} will be sufficient. Indeed, we only use Theorem \ref{thm:EstHent}\ref{thm:EstHent:gradu:UB} to obtain bounds on the discrete derivatives of $u^N(t,x)$ in \eqref{uN:der:bds} derived in \cite{FS} which are uniform in time. Without any bounds on the derivatives of $u^N(0,x)$, \cite[Corollaries 4.3 and 5.8]{FS} guarantees that
\begin{equation} \label{uN:bds:intro}
  |\nabla^N u^N(t,x)| \leq \frac{ C K^{1/\sigma} }{\sqrt t} 
  \quad \text{and} \quad
  |\Delta^N u^N(t,x)| \leq \frac{  C K^{2/\sigma} }t.
\end{equation}
The singularity of these bounds at $t=0$ would correspond in our use of Theorem \ref{BG} to $\Theta = 1$, which is the critical value at which this theorem does not apply. This gives reason to expect that under weaker conditions than those in Theorem \ref{thm:EstHent}\ref{thm:EstHent:gradu:UB} a certain bound on $u^N$ holds (weaker than \eqref{uN:der:bds} but stronger than \eqref{uN:bds:intro}) for which Theorem \ref{BG} applies. We leave this for future research.

\paragraph{On the assumptions in Theorem \ref{thm:main}.} We wish to comment on a few assumptions in Theorem \ref{thm:main} which may be lifted in future works. First, it may be possible to remove assumption (6). If so, we expect \eqref{eq:RD} to be the hydrodynamic limit in the case where $K$ is constant. To consider the case $K \to \infty$ as $N \to \infty$, the following two preliminary problems should be addressed first:
\begin{enumerate}
\item the sharp-interface limit for \eqref{eq:RD}, which is a PDE-theory problem,
\item convergence of the solution of the discretized version of \eqref{eq:RD}, and bounds on its discrete derivatives in terms of $K$ and $N$.
\end{enumerate}
These problems are interesting on themselves.

Second, it is desired to remove the gradient condition (5) in addition to removing (6). Then, for constant $K$ we expect 
\[
  \partial_t\rho= \sum_{i,j=1}^d \frac{\partial}{\partial v_i} \left(D_{ij}(\rho)\frac{\partial\rho}{\partial v_j}\right) + K f(\rho), \quad v\in \T^d
\]
to be the hydrodynamic equation, where $D(\rho)=\{D_{ij}(\rho)\}_{i,j=1}^d$ is the diffusion matrix of density $\rho\in[0,1]$. For the case $K \to \infty$ as $N \to \infty$ we expect that the equivalent of Theorem \ref{thm:EstHent} is very challenging to prove; we refer to \cite{FUY} and \cite[{Chapter} 7]{KL} for hydrodynamic limits of simpler non-gradient systems.

Third, it is desirable to weaken the bound on $K$ from $\log N$ to $N^\epsilon$. The relative entropy method does not seem fit for this challenge. While the method based on correlation functions as used in \cite{KS,Bon} allows for power law bounds on $K$ in simpler scenarios, it seems difficult to extend it to our setting in which $c_{x,y}(\eta)$ depends on $\eta$. 

%
%

\paragraph{Organization of the paper.} In Section \ref{s:H} and Section \ref{s:BG:pf} we prove Theorem \ref{thm:EstHent} (the entropy estimate) and Theorem \ref{BG} (the Boltzmann-Gibbs principle) respectively. 

\section{Proof of Theorem \ref{thm:EstHent}}  
\label{s:H}

As mentioned in the introduction, we will prove Theorem \ref{thm:EstHent} by applying Gronwall's lemma to the relative entropy $H(\mu_t^N|\nu_t^N)$. With this aim, we first establish bounds on $u^N(t,x)$ in the preliminary Section \ref{s:H:uN:bds}. Then, we compute $\frac d{dt} H(\mu_t^N|\nu_t^N)$ in Section \ref{s:H:ddt}. To bound the terms in the result of this computation, we apply the Boltzmann-Gibbs principle, Theorem \ref{BG}, in Section \ref{s:H:BG} to complete the estimate to which Gronwall's lemma applies.

\subsection{Bounds on $u^N$}
\label{s:H:uN:bds} 

\paragraph{Maximum principle for the discrete hydrodynamic equation
\eqref{eq:HD-discre}.}
Since $P$ is monotone, $f$ is bistable and $u^N(0,x)$ is bounded from below and above (see Theorem \ref{thm:EstHent}\ref{thm:EstHent:u:LBUB}), it follows from the maximum principle of \eqref{eq:HD-discre} (see \cite[Lemma 3.1]{EFHPS}) that
\begin{equation} \label{uN:uf:bds}
  0 < \min \{ u_-, \alpha_1 \}
  \leq u^N(t,x)
  \leq \max \{ u_+, \alpha_2 \} < 1
  \quad \text{for all } t \in [0,T] \text{ and all } x \in \T_N^d,
\end{equation}
where $\alpha_1$ and $\alpha_2$ are the stable points of $f$ (see (BS)).

\paragraph{Uniform bounds on $\nabla^N u^N$ and $\Delta^N u^N$.} In \cite{FS} uniform bounds are constructed for discrete derivatives of the solution $u^N$ to \eqref{eq:HD-discre}. For our purpose, the bounds provided in \cite[Corollaries 4.4 and 5.10]{FS} are sufficient. These bounds state that if $u^N(0,x)$ satisfies the bounds in Theorem \ref{thm:EstHent}\ref{thm:EstHent:gradu:UB}, then 
\begin{subequations} \label{uN:der:bds}
\begin{align} \label{uN:der:bds:1}
  |\nabla^N u^N(t,x)| &\leq C K^{1/\sigma},  \\\label{uN:der:bds:2}
  |\Delta^N u^N(t,x)| &\leq C K^{2/\sigma}
\end{align}
\end{subequations}
for all $t \in [0,T]$, $x \in \T_N^d$ and $K\ge 1$, where $C > 0$ is a constant independent of $N, K, t, x$, and $\sigma \in (0,1)$ is the exponent introduced in Theorem \ref{thm:main}, depending only on $d$, $P$ and $f$.

\subsection{Time derivative of relative entropy}
\label{s:H:ddt}

We start by citing a theorem on a bound for $\frac d{dt} H(\mu_t^N|\nu_t)$, where $\nu_t$ is any probability measure on
$\mathcal{X}_N$ with full support and differentiable in $t$. With this aim and for later use, we introduce some notation.
Let $\bf 1$ be the constant function equal to $1$ on $\mathcal{X}_N$ and $\bf 1_{\{\eta \in \mathcal A\}}$ be the indicator function for some $\mathcal A \subset \mathcal{X}_N$.
For a function $f$ on $\mathcal{X}_N$ 
and a measure $\nu$ on $\mathcal{X}_N$, set
\begin{align*} 
\mathcal{D}_N(f;\nu) = 2N^2 \mathcal{D}_K(f;\nu) + K \mathcal{D}_G(f;\nu)
\end{align*}
as the Dirichlet form, where
\begin{align*}
\mathcal{D}_K(f;\nu) & = \frac14 \sum_{x,y\in\T_N^d} \int_{\mathcal{X}_N} 
 c_{x,y}(\eta) \{f(\eta^{x,y})-f(\eta)\}^2 d\nu,  \\
\mathcal{D}_G(f;\nu) & = \sum_{x\in\T_N^d} \int_{\mathcal{X}_N} 
 c_x(\eta)\{f(\eta^x)-f(\eta)\}^2 d\nu.
\end{align*}
Take $m$ as a reference measure on 
$\mathcal{X}_N$ with full support, and set $\psi_t := \frac{d\nu_t}{dm}$.  Finally,
We denote the adjoint of an operator $L$ on $L^2(m)$ by $L^{*,m}$. Then, we have the following result. A proof can be found in \cite[{Theorem} 4.2]{F18} or \cite[{Lemma} A.1]{JM2}.

\begin{thm}[Upper bound on entropy production] \label{thm:4.2} 
\begin{equation} \label{eq:dH}
\frac{d}{dt} H(\mu_t^N|\nu_t) \le - \mathcal{D}_N\left(\sqrt{\frac{d\mu_t^N}{d\nu_t}}; \nu_t\right) 
+ \int_{\mathcal{X}_N} (L_N^{*,\nu_t}{\bf 1} - \partial_t \log \psi_t) d\mu_t^N.
\end{equation}
\end{thm}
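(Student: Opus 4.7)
The plan is a Yau-type entropy production calculation, carried out with care because the reference measure $\nu_t$ is time-dependent and is generally \emph{not} invariant for $L_N$. I would proceed in three stages: differentiate $H(\mu_t^N|\nu_t)$ using the fixed reference measure $m$, substitute the Kolmogorov forward equation for $\mu_t^N$, and close with a pointwise inequality that separates a Dirichlet-form contribution from a linear remainder which is then recognized as $\int L_N^{*,\nu_t}{\bf 1}\,d\mu_t^N$.

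First, set $\phi_t := d\mu_t^N/dm$ so that $f_t := d\mu_t^N/d\nu_t = \phi_t/\psi_t$. Since $m$ does not depend on $t$,
$$
H(\mu_t^N|\nu_t) = \int_{\mathcal{X}_N} \phi_t \log \phi_t \, dm - \int_{\mathcal{X}_N} \phi_t \log \psi_t \, dm.
$$
Differentiating in $t$ and using $\int \partial_t \phi_t \, dm = 0$ (because $\phi_t$ is a probability density), I obtain
$$
\frac{d}{dt} H(\mu_t^N|\nu_t) = \int_{\mathcal{X}_N} (\partial_t\phi_t) \log f_t \, dm - \int_{\mathcal{X}_N} \partial_t\log\psi_t \, d\mu_t^N,
$$
so the second term already matches the one on the right-hand side of \eqref{eq:dH}. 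Using the Kolmogorov forward equation $\partial_t \phi_t = L_N^{*,m}\phi_t$ and duality with respect to $m$, the first term becomes $\int f_t \, L_N(\log f_t) \, d\nu_t$, and the task reduces to bounding this.

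Second, I would invoke the elementary inequality
$$
a(\log b - \log a) \leq -(\sqrt b - \sqrt a)^2 + (b-a) \qquad (a,b > 0),
$$
which follows from $\log t \leq 2(\sqrt t - 1)$ applied at $t = b/a$. Applying this with $a = f_t(\eta)$ and $b = f_t(\eta^{x,y})$ along each Kawasaki transition (weighted by $c_{x,y}(\eta)$) and with $b = f_t(\eta^x)$ along each Glauber transition (weighted by $c_x(\eta)$), then integrating against $\nu_t$ and multiplying by the respective prefactors $N^2$ and $K$ from $L_N$, the quadratic part assembles exactly into $-\mathcal{D}_N(\sqrt{f_t};\nu_t)$ by the definitions of $\mathcal{D}_K$ and $\mathcal{D}_G$. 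The linear remainder gives $\int L_N f_t \, d\nu_t$, which by a further application of duality (now in $L^2(\nu_t)$) equals $\int f_t \, L_N^{*,\nu_t}{\bf 1} \, d\nu_t = \int L_N^{*,\nu_t}{\bf 1} \, d\mu_t^N$. Combining this with the $\partial_t \log \psi_t$ contribution from the first step produces \eqref{eq:dH}.

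\textbf{Main obstacle.} Because $\nu_t$ is generally not reversible for $L_N$, the familiar identity $\int f L f \, d\nu = -\mathcal{D}(f;\nu)$ fails; the non-vanishing linear remainder $(b-a)$ in the pointwise inequality is precisely what encodes the obstruction to invariance, and the delicate point is to repackage this symmetric double sum under $\nu_t$ into the pairing $\int L_N^{*,\nu_t}{\bf 1}\,d\mu_t^N$ via duality. A secondary, purely bookkeeping, task is to match the $\tfrac12$'s and $\tfrac14$'s in the definitions of $\mathcal{D}_K$, $\mathcal{D}_G$, and $\mathcal{D}_N$ with the symmetry factors in $L_K$ and $L_G$, using $c_{x,y}=c_{y,x}$ and $\eta^{x,y}=\eta^{y,x}$ to collapse the ordered double sum to the unordered Dirichlet form.
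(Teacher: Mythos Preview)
Your proof is correct and is precisely the standard Yau relative-entropy computation. Note, however, that the paper does not actually prove this theorem: it simply cites \cite[Theorem 4.2]{F18} and \cite[Lemma A.1]{JM2}, and your argument is exactly the one found in those references (differentiate $H$ with a fixed background measure $m$, apply the forward equation for $\phi_t$, then use $a(\log b-\log a)\le -(\sqrt b-\sqrt a)^2+(b-a)$ bond-by-bond and flip-by-flip to extract the Dirichlet form, with the linear remainder identified as $\int L_N^{*,\nu_t}\mathbf{1}\,d\mu_t^N$ by duality).
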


We apply Theorem \ref{thm:4.2} with $\nu_t = \nu_t^N$. We will not use $\mathcal{D}_N$ to derive our bounds; we simply estimate $\mathcal{D}_N \geq 0$.
We split the second term in the right-hand side of \eqref{eq:dH} in three parts; one related to $L_K$, one to $L_G$ and one to $\psi_t$, and treat all computations separately in the three lemmas below.

For the computations related to $L_N$ (recall \eqref{LN}), the time dependence is irrelevant. In these computations, we drop $t$ from the notation whenever convenient. Moreover, we set 
\begin{align} \label{etab}
u_x = u^N(t,x), \qquad
\eta_x = \eta_x(t), \qquad
\bar\eta_x = \eta_x-u_x
\end{align}
and define the compressibility
\begin{equation} \label{chi}
  \chi(\rho) = \rho(1-\rho) \quad \text{for } \rho \in [0,1].
\end{equation}

\begin{lem}  \label{lem:4.4} 
\begin{align} \label{eq:3.3-b}
L_K^{*,\nu_t^N}{\bf 1} =  & -\frac12 \sum_{x,y\in\T_N^d:|x-y|=1}  \frac{(u_y-u_x)^2}{\chi(u_x) \chi(u_y)} c_{x,y}(\eta) \bar\eta_x \bar\eta_y\\\notag
& + \frac12 \sum_{x, y\in\T_N^d:|x-y|=1} c_{x,y}(\eta) \Big( \frac{ \bar\eta_x }{ \chi(u_x) } - \frac{ \bar\eta_y }{ \chi(u_y) } \Big)(u_y-u_x).\notag
\end{align}
\end{lem}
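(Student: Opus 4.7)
The plan is to compute $L_K^{*,\nu_t^N}{\bf 1}$ directly from the definition of the adjoint in $L^2(\nu_t^N)$, using the reversibility assumption (4) and the product structure of $\nu_t^N$.

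First I would derive a generic formula for the adjoint. Starting from $\int (L_K g)\,f\,d\nu_t^N$ and performing the change of variables $\eta \mapsto \eta^{x,y}$ in the ``gain'' term, the counting measure is preserved and $c_{x,y}(\eta^{x,y}) = c_{x,y}(\eta)$ by assumption (4). Comparing with $\int g\cdot L_K^{*,\nu_t^N} f\,d\nu_t^N$ then gives
\begin{equation*}
L_K^{*,\nu_t^N} f(\eta) = \frac{1}{2}\sum_{|x-y|=1} c_{x,y}(\eta) \left[\frac{\nu_t^N(\eta^{x,y})}{\nu_t^N(\eta)}\, f(\eta^{x,y}) - f(\eta)\right],
\end{equation*}
and specializing to $f \equiv {\bf 1}$ reduces this to
\begin{equation*}
L_K^{*,\nu_t^N} {\bf 1}(\eta) = \frac{1}{2}\sum_{|x-y|=1} c_{x,y}(\eta) \left[\frac{\nu_t^N(\eta^{x,y})}{\nu_t^N(\eta)} - 1\right].
\end{equation*}

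Second, I would exploit the product structure $\nu_t^N(\eta) = \prod_z \mu_z(\eta_z)$ with $\mu_z(a) = u_z^a(1-u_z)^{1-a}$ and $u_z := u^N(t,z)$. Since only the factors at $x$ and $y$ change, the ratio collapses to $\mu_x(\eta_y)\mu_y(\eta_x) / [\mu_x(\eta_x)\mu_y(\eta_y)]$, and a one-line case check on $\{0,1\}^2$ yields the elementary identity
\begin{equation*}
\mu_x(\eta_y)\mu_y(\eta_x) - \mu_x(\eta_x)\mu_y(\eta_y) = (\eta_x - \eta_y)(u_y - u_x),
\end{equation*}
whence
\begin{equation*}
\frac{\nu_t^N(\eta^{x,y})}{\nu_t^N(\eta)} - 1 = \frac{(\eta_x - \eta_y)(u_y - u_x)}{\mu_x(\eta_x)\mu_y(\eta_y)}.
\end{equation*}

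Third, I would convert this into the centered variables $\bar\eta_z = \eta_z - u_z$ so as to match the decomposition in \eqref{eq:3.3-b}. The identity to establish is
\begin{equation*}
\frac{(\eta_x - \eta_y)(u_y - u_x)}{\mu_x(\eta_x)\mu_y(\eta_y)} = -\frac{(u_y - u_x)^2}{\chi(u_x)\chi(u_y)}\,\bar\eta_x\bar\eta_y + \left(\frac{\bar\eta_x}{\chi(u_x)} - \frac{\bar\eta_y}{\chi(u_y)}\right)(u_y - u_x).
\end{equation*}
Both sides are functions of $(\eta_x,\eta_y)\in\{0,1\}^2$, so I would verify it by checking the four cases: the two diagonal cases ($\eta_x = \eta_y$) yield $0$ on the left and exact cancellation between the two terms on the right, while the two off-diagonal cases reduce, after factoring out $(u_y - u_x)/[\mu_x(\eta_x)\mu_y(\eta_y)]$, to the trivial identity $1 = (\eta_x - \eta_y)^2$. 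Multiplying by $c_{x,y}(\eta)$, summing over ordered pairs with $|x-y|=1$, and inserting the prefactor $1/2$ produces exactly \eqref{eq:3.3-b}.

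The only nontrivial step is the algebraic check in the third step; everything else is standard functional calculus on a finite configuration space. The main obstacle is really just bookkeeping: getting the sign in front of $\bar\eta_x\bar\eta_y$ right, and recognizing the ``discrete gradient'' structure $\bar\eta_x/\chi(u_x) - \bar\eta_y/\chi(u_y)$ as the natural companion to $(u_y - u_x)$, since this pairing is what will ultimately enable summation by parts when the expression is plugged back into \eqref{eq:dH}.
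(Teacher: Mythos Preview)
Your proof is correct. The overall strategy matches the paper's: compute the adjoint via the change of variables $\eta\mapsto\eta^{x,y}$ (using assumption~(4)), evaluate the Radon--Nikodym ratio $\nu_t^N(\eta^{x,y})/\nu_t^N(\eta)$ from the product structure, and then rewrite in terms of the centered variables $\bar\eta_x,\bar\eta_y$.

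The algebraic execution, however, is genuinely more economical than the paper's. The paper expresses $\nu_t^N(\eta^{x,y})/\nu_t^N(\eta)$ through the indicator functions ${\bf 1}_{\{\eta_x=1,\eta_y=0\}}$, ${\bf 1}_{\{\eta_x=0,\eta_y=1\}}$, ${\bf 1}_{\{\eta_x=\eta_y\}}$, expands each indicator in $\bar\eta_x,\bar\eta_y$, and then performs several symmetrizations in $(x,y)$ to reach \eqref{eq:3.3-b}. Your route instead isolates the closed-form identity $\mu_x(\eta_y)\mu_y(\eta_x)-\mu_x(\eta_x)\mu_y(\eta_y)=(\eta_x-\eta_y)(u_y-u_x)$ and then verifies the final decomposition by a direct four-case check on $\{0,1\}^2$. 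This buys brevity and makes the structure of the answer (quadratic term in $\bar\eta$ plus discrete-gradient term) visible from the outset; the paper's approach, by contrast, makes the intermediate cancellations more explicit, which is perhaps easier to follow line by line but longer.
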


\begin{proof}
Recall
$$
\int_{\mathcal{X}_N} \Big( L_K^{*,\nu_t^N}{\bf 1} \Big)  f \, d\nu_t^N
= \int_{\mathcal{X}_N} L_K f \, d\nu_t^N 
= \frac12 \sum_{x,y} \sum_\eta c_{x,y}(\eta)
\{f(\eta^{x,y})-f(\eta)\} \nu_t^N(\eta)
$$
for any function $f$ on $\mathcal{X}_N$,
where $\sum_{x,y} =\sum_{x,y\in\T_N^d: |x-y|=1}$ and $\sum_\eta = \sum_{\eta \in \mathcal{X}_N }$.
By the change of variables $\eta\mapsto\eta^{x,y}$ and condition (4) of 
$c_{x,y}(\eta)$, we obtain
\begin{align*}  
 \sum_{x,y} & \sum_\eta c_{x,y}(\eta) f(\eta^{x,y})\nu_t^N(\eta)
 = \sum_{x,y} \sum_\eta c_{x,y}(\eta)f(\eta)\nu_t^N(\eta^{x,y}) \\
& = \sum_{x,y} \sum_\eta c_{x,y}(\eta)f(\eta) \bigg( \frac{(1-u_x)u_y}{u_x(1-u_y)} {\bf 1}_{\{\eta_x=1,\eta_y=0\}}
+\frac{(1-u_y)u_x}{u_y(1-u_x)} {\bf 1}_{\{\eta_y=1,\eta_x=0\}} \\
& \hskip 50mm + {\bf 1}_{\{\eta_x=\eta_y\}} \bigg) \nu_t^N(\eta) \\
& =  \sum_{x,y} \sum_\eta \left( c_{x,y}(\eta)f(\eta) \nu_t^N(\eta) 
+ c_{x,y}(\eta)f(\eta) \frac{u_y-u_x}{u_x(1-u_y)} {\bf 1}_{\{\eta_x=1,\eta_y=0\}}
 \nu_t^N(\eta) \right. \\
& \hskip 30mm  \left.
+ c_{x,y}(\eta) f(\eta) \frac{u_x-u_y}{u_y(1-u_x)} {\bf 1}_{\{\eta_y=1,\eta_x=0\}}
 \nu_t^N(\eta)\right).
\end{align*}
Since the second and third terms coincide by exchanging the role of
$x$ and $y$, and 
\begin{align*}  
{\bf 1}_{\{\eta_x=1,\eta_y=0\}} & = \eta_x(1-\eta_y) =
 (\bar\eta_x + u_x) (1-\bar\eta_y-u_y) \\
& = \bar\eta_x(1-\bar\eta_y) -\bar\eta_x u_y - \bar\eta_y u_x + u_x(1 -u_y),
\end{align*}
we have
$$  
L_K^{*,\nu_t^N}{\bf 1} = \sum_{x,y} c_{x,y}(\eta)\frac{u_y-u_x}{u_x(1-u_y)} 
\big( \bar\eta_x(1-\bar\eta_y) -\bar\eta_x u_y - \bar\eta_y u_x + u_x(1 -u_y) \big).
$$
Note that the sum of the last term is equal to $\sum_{x,y}c_{x,y}(\eta)(u_y-u_x)=0$ since $c_{x,y} = c_{y,x}$ for any $x,y$.
By exchanging the role of $x$ and $y$ in the
third term, the sum of the second and third term equals
\begin{align*}
- \sum_{x,y} &c_{x,y}(\eta)\left\{ \frac{u_y-u_x}{u_x(1-u_y)} u_y +\frac{u_x-u_y}{u_y(1-u_x)} u_y 
\right\} \bar\eta_x \\
& = - \sum_{x,y} c_{x,y}(\eta)(u_y-u_x) \frac{(1-u_x)u_y-u_x(1-u_y)}{(1-u_x)u_x(1-u_y)} \bar\eta_x \\
& = - \sum_{x,y} c_{x,y}(\eta)\frac{(u_y-u_x)^2}{(1-u_x)u_x(1-u_y)} \bar\eta_x.
\end{align*}
Hence,
\begin{equation*} 
L_K^{*,\nu_t^N}{\bf 1} = \sum_{x,y} c_{x,y}(\eta) \bar\eta_x \left\{\frac{u_y-u_x}{u_x(1-u_y)} (1-\bar\eta_y)
- \frac{(u_y-u_x)^2}{(1-u_x)u_x(1-u_y)} \right\}.
\end{equation*}
Since the $\bar\eta_y$-independent part inside the braces equals
\begin{align*}
&(u_y-u_x) \left\{\frac{1}{u_x(1-u_y)} - \frac{(u_y-u_x)}{(1-u_x)u_x(1-u_y)} \right\}  \\
&\qquad  = (u_y-u_x) \frac{(1-u_x)-(u_y-u_x)}{(1-u_x)u_x(1-u_y)}
 = \frac{u_y-u_x}{\chi(u_x)},
\end{align*}
we obtain
\begin{equation*} 
L_K^{*,\nu_t^N}{\bf 1} = -\sum_{x,y} c_{x,y}(\eta)\frac{u_y-u_x}{u_x(1-u_y)} \bar\eta_x \bar\eta_y
+ \sum_{x} \frac{\bar\eta_x}{\chi(u_x)}  \sum_{y:|x-y|=1} c_{x,y}(\eta)(u_y-u_x).
\end{equation*}
Finally, both terms can be symmetrized in $x$ and $y$. Since this is immediate for the second term, we focus only on the first term:
\begin{align*}
-\sum_{x,y} &c_{x,y}(\eta)\frac{u_y-u_x}{u_x(1-u_y)} \bar\eta_x \bar\eta_y \\
&= -\frac12 \sum_{x,y} c_{x,y}(\eta)\left\{ \frac{u_y-u_x}{u_x(1-u_y)} 
+ \frac{u_x-u_y}{u_y(1-u_x)} \right\} \bar\eta_x \bar\eta_y \\
&=  -\frac12 \sum_{x,y} c_{x,y}(\eta)(u_y-u_x) \frac{u_y(1-u_x)- u_x(1-u_y)}
{\chi(u_x)\chi(u_y)} \bar\eta_x \bar\eta_y\\
&= -\frac12 \sum_{x,y}  c_{x,y}(\eta)\frac{(u_y-u_x)^2}{\chi(u_x)\chi(u_y)} 
\bar\eta_x \bar\eta_y.
\end{align*}
This completes the proof of \eqref{eq:3.3-b}.
\end{proof}

We have the following two lemmas for the Glauber part and $\partial_t \log\psi_t(\eta)$,
respectively.  

\begin{lem}[{\cite[{Lemma} 4.5]{F18}}] \label{lem:2.5FT}
Recalling the notation from \eqref{etab},
\begin{align} \label{eq:L_G1}
L_G^{*,\nu_t^N}{\bf 1} = \sum_{x\in\T_N^d} \left(\frac{ c_x^+(\eta)}{u_x}-
\frac{c_x^-(\eta)}{1-u_x}\right)\bar\eta_x.
\end{align}
\end{lem}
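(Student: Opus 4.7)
The plan is to mirror the computation from the proof of Lemma \ref{lem:4.4}, but now for the Glauber generator, which is structurally simpler since it involves only single-site flips and requires no $(x,y)$-symmetrization. Starting from the defining identity
\[
\int_{\mathcal X_N} \bigl( L_G^{*,\nu_t^N} \mathbf{1} \bigr) f \, d\nu_t^N
= \int_{\mathcal X_N} L_G f \, d\nu_t^N
= \sum_{x\in\T_N^d} \sum_{\eta} c_x(\eta) \bigl( f(\eta^x) - f(\eta) \bigr) \nu_t^N(\eta)
\]
for arbitrary test $f$, I would change variables $\eta \mapsto \eta^x$ in the term containing $f(\eta^x)$. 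Since $\nu_t^N$ is product Bernoulli with marginal means $u_x$, the Radon-Nikodym factor $\nu_t^N(\eta^x)/\nu_t^N(\eta)$ equals $u_x/(1-u_x)$ when $\eta_x = 0$ and $(1-u_x)/u_x$ when $\eta_x = 1$. Reading off the identity for arbitrary $f$ then yields
\[
L_G^{*,\nu_t^N} \mathbf{1}(\eta)
= \sum_{x\in\T_N^d} \Bigl( c_x(\eta^x) \tfrac{\nu_t^N(\eta^x)}{\nu_t^N(\eta)} - c_x(\eta) \Bigr).
\]

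The key step is then to exploit the decomposition \eqref{eq:c+-}: $c_x(\eta) = c_x^+(\eta)(1-\eta_x) + c_x^-(\eta)\eta_x$ with $c_x^\pm$ independent of $\eta_x$. This independence is essential; it gives $c_x^\pm(\eta^x) = c_x^\pm(\eta)$, hence $c_x(\eta^x) = c_x^+(\eta)\eta_x + c_x^-(\eta)(1-\eta_x)$. Splitting the summand above according to $\eta_x \in \{0,1\}$ and regrouping, both cases fuse into
\[
\Bigl( \tfrac{\eta_x}{u_x} - \tfrac{1-\eta_x}{1-u_x} \Bigr) \bigl( c_x^+(\eta)(1-u_x) - c_x^-(\eta) u_x \bigr).
\]
The first factor collapses via the elementary identity $\eta_x/u_x - (1-\eta_x)/(1-u_x) = (\eta_x - u_x)/\chi(u_x) = \bar\eta_x/\chi(u_x)$, and distributing the resulting $1/\chi(u_x) = 1/[u_x(1-u_x)]$ across the second factor produces exactly $\bar\eta_x \bigl( c_x^+(\eta)/u_x - c_x^-(\eta)/(1-u_x) \bigr)$, which is \eqref{eq:L_G1}.

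\textbf{Main obstacle.} There is none worth the name: the argument is a routine direct calculation, shorter than its $L_K$ counterpart because no symmetrization in neighboring sites is needed. The only conceptually delicate point is remembering that $c^\pm$ do not depend on $\eta_0$, which is precisely what makes the change-of-variables factor combine cleanly with $c_x(\eta^x)$ to produce a single $\bar\eta_x$-linear expression.
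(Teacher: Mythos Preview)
Your proposal is correct and follows exactly the route the paper indicates: the paper does not give an explicit proof but cites \cite[Lemma~4.5]{F18} and remarks that it is ``a direct computation in a similar style as for the proof of Lemma~\ref{lem:4.4}.'' Your change of variables $\eta\mapsto\eta^x$, use of the decomposition \eqref{eq:c+-}, and the identity $\eta_x/u_x-(1-\eta_x)/(1-u_x)=\bar\eta_x/\chi(u_x)$ are precisely that computation carried out in full.
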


\begin{lem}[{\cite[{Lemma} 4.6]{F18}}] \label{lem:4.6F18}
Recalling the notation from \eqref{etab} and \eqref{chi}, and denoting explicitly the dependence on $t$,
\begin{equation} \label{eq:logpsi}
\partial_t \log\psi_t(\eta) = \sum_{x\in\T_N^d} \partial_t u_x(t)
\frac{ \bar\eta_x(t) }{ \chi(u_x(t)) }.
\end{equation}
\end{lem}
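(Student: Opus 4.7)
The plan is to carry out a direct, site-by-site computation using the product structure of $\nu_t^N$. The first step is to choose the reference measure $m$ concretely: take $m$ to be the counting measure on the finite configuration space $\mathcal{X}_N = \{0,1\}^{\T_N^d}$, so that $\psi_t(\eta) = \nu_t^N(\eta)$ is simply the probability mass function. By definition of $\nu_t^N = \nu_{u^N(t,\cdot)}$ as an inhomogeneous product of Bernoulli measures with means $u_x(t) = u^N(t,x)$, we have the explicit formula
\begin{equation*}
\psi_t(\eta) = \prod_{x \in \T_N^d} u_x(t)^{\eta_x}\bigl(1 - u_x(t)\bigr)^{1-\eta_x}.
\end{equation*}
(If one prefers a reference measure with full support independent of $t$, choosing $m = \nu_{1/2}^N$ changes $\psi_t$ only by a factor of $2^{N^d}$, which is eliminated by $\partial_t \log$, so the outcome is identical.)

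Next, I would take logarithms to turn the product into a sum:
\begin{equation*}
\log \psi_t(\eta) = \sum_{x \in \T_N^d} \Bigl\{ \eta_x \log u_x(t) + (1-\eta_x) \log\bigl(1 - u_x(t)\bigr) \Bigr\}.
\end{equation*}
Because $u_x(t) \in (0,1)$ by the maximum principle bound \eqref{uN:uf:bds} (so each logarithm is differentiable in $t$) and the sum over $\T_N^d$ is finite, one may differentiate termwise in $t$ to obtain
\begin{equation*}
\partial_t \log \psi_t(\eta) = \sum_{x \in \T_N^d} \partial_t u_x(t) \left( \frac{\eta_x}{u_x(t)} - \frac{1-\eta_x}{1-u_x(t)} \right).
\end{equation*}

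The final step is to combine the two fractions over the common denominator $u_x(t)(1-u_x(t)) = \chi(u_x(t))$:
\begin{equation*}
\frac{\eta_x}{u_x} - \frac{1-\eta_x}{1-u_x} = \frac{\eta_x(1 - u_x) - (1-\eta_x)u_x}{\chi(u_x)} = \frac{\eta_x - u_x}{\chi(u_x)} = \frac{\bar\eta_x}{\chi(u_x)},
\end{equation*}
which yields exactly \eqref{eq:logpsi}. There is no real obstacle here: the proof is essentially a one-line calculation enabled by the product structure of $\nu_t^N$ and the uniform lower bound on $\chi(u_x(t))$ from \eqref{uN:uf:bds}. The only conceptual point worth flagging is that the product structure of $\nu_t^N$ is essential; were $\nu_t$ not a product measure, no such clean pointwise expression for $\partial_t \log \psi_t$ would be available, and this is precisely the reason the relative-entropy method is built around comparing $\mu_t^N$ to a well-chosen product reference measure.
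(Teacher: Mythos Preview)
Your proof is correct and matches the paper's approach: the paper does not spell out the argument but cites \cite[Lemma 4.6]{F18} and remarks that the proof is a direct computation in the same style as Lemma \ref{lem:4.4}, which is exactly what you carry out. The site-by-site differentiation of $\log \psi_t$ using the product structure of $\nu_t^N$ and the identity $\eta_x/u_x - (1-\eta_x)/(1-u_x) = \bar\eta_x/\chi(u_x)$ is precisely the intended calculation.
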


We remark that Lemmas \ref{lem:2.5FT} and \ref{lem:4.6F18} are proven by a direct computation in a similar style as for the proof of Lemma \ref{lem:4.4}.

We summarize in
the following corollary the identities \eqref{eq:3.3-b}, \eqref{eq:L_G1} and
\eqref{eq:logpsi} obtained in the three lemmas above.

\begin{cor}  \label{Cor:2.6}
Recalling the notation from \eqref{etab} and \eqref{chi}, we have
\begin{align}  \label{eq:cor2.6}
L_N^{*,\nu_t^N}{\bf 1} - \partial_t \log\psi_t(\eta) 
= & -\frac{N^2}2 \sum_{x,y\in \T_N^d:|x-y|=1} \frac{ (u_y(t)-u_x(t))^2 }{ \chi(u_x(t)) \chi(u_y(t)) } c_{x,y}(\eta) \bar\eta_x(t) \bar\eta_y(t) \\
&+\frac{N^2}2 \sum_{x, y\in\T_N^d:|x-y|=1} c_{x,y}(\eta) \Big( \frac{ \bar\eta_x(t) }{ \chi(u_x(t)) } - \frac{ \bar\eta_y(t) }{ \chi(u_y(t)) } \Big)(u_y(t)-u_x(t))  \notag\\
& + K \sum_{x\in\T_N^d} \left(\frac{ c_x^+(\eta)}{u_x(t)}-
\frac{c_x^-(\eta)}{1-u_x(t)}\right) \bar\eta_x(t)
- \sum_{x\in\T_N^d} \partial_t u_x(t) \frac{ \bar\eta_x(t) }{ \chi(u_x(t)) }.  \notag
\end{align}
\end{cor}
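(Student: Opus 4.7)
The plan is straightforward: Corollary 2.6 is a direct assembly of the three preceding lemmas, so the proof reduces to bookkeeping. First I would invoke linearity of the adjoint together with the decomposition $L_N = N^2 L_K + K L_G$ (note that $\nu_t^N$ is the same reference measure for both parts), which yields
\begin{equation*}
L_N^{*,\nu_t^N}{\bf 1} = N^2 L_K^{*,\nu_t^N}{\bf 1} + K L_G^{*,\nu_t^N}{\bf 1}.
\end{equation*}
Then the identity \eqref{eq:cor2.6} becomes a matter of substituting the formulas supplied by Lemma \ref{lem:4.4}, Lemma \ref{lem:2.5FT}, and Lemma \ref{lem:4.6F18}.

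Concretely, multiplying Lemma \ref{lem:4.4} by $N^2$ produces the first two lines on the right-hand side of \eqref{eq:cor2.6} (the $\bar\eta_x\bar\eta_y$-quadratic term and the symmetrized linear term in $\bar\eta$). Multiplying Lemma \ref{lem:2.5FT} by $K$ yields the third line involving the creation/annihilation rates $c_x^\pm$. Finally, subtracting the expression for $\partial_t\log\psi_t(\eta)$ given in Lemma \ref{lem:4.6F18} contributes the remaining term $-\sum_x \partial_t u_x(t)\,\bar\eta_x(t)/\chi(u_x(t))$. No cancellations occur across the four contributions, so each appears with the prefactor and sign stated.

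Since each ingredient has already been established, there is no genuine obstacle; the only care needed is the notational one of reintroducing the explicit $t$-dependence, because Lemmas \ref{lem:4.4} and \ref{lem:2.5FT} were stated under the convention $u_x = u^N(t,x)$ and $\eta_x = \eta_x(t)$ with $t$ suppressed. Once $t$ is reinstated uniformly in every term, \eqref{eq:cor2.6} follows by inspection. Thus the proof is essentially a one-line assembly and does not require any further computation.
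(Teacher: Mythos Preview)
Your proposal is correct and matches the paper's approach exactly: the paper presents Corollary \ref{Cor:2.6} simply as a summary of \eqref{eq:3.3-b}, \eqref{eq:L_G1} and \eqref{eq:logpsi}, obtained by substituting them into $L_N^{*,\nu_t^N}{\bf 1} - \partial_t\log\psi_t = N^2 L_K^{*,\nu_t^N}{\bf 1} + K L_G^{*,\nu_t^N}{\bf 1} - \partial_t\log\psi_t$, with no further computation.
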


\subsection{Application of the Boltzmann-Gibbs principle}
\label{s:H:BG}

Here we prove Theorem \ref{thm:EstHent}. As in Section \ref{s:H:ddt} we set $u_x(t) = u^N(t,x)$. Let $T_1 \in (0,T]$. Using Theorem \ref{thm:4.2} and the bound on the relative entropy at $t = 0$, we obtain
\begin{align} \label{pf:3}
  H(\mu_{T_1}^N|\nu_{T_1}^N)
  & = \int_0^{T_1} \frac{d}{dt} H(\mu_t^N|\nu_t^N) \, dt + H(\mu^N|\nu_0^N) \\
  & \le \E^{\mu^N} \bigg[ \int_0^{T_1} (L_N^{*,\nu_t^N}{\bf 1} - \partial_t \log \psi_t) \, dt \bigg] + C N^{d - \epsilon}.  \notag
\end{align}

In what follows, we apply \eqref{eq:cor2.6} to the expectation in the right-hand side, and treat the resulting four terms separately. The fourth term needs no treatment; it is given by
\begin{equation*} 
  \sum_{x\in\T_N^d} \E^{\mu^N} \bigg[ \int_0^{T_1} - \partial_t u_x(t) \frac{ \bar\eta_x(t) }{ \chi(u_x(t)) } \, dt \bigg].
\end{equation*}
For the other three terms we apply the Boltzmann-Gibbs principle, Theorem \ref{BG}. Note from \eqref{uN:der:bds:1} that the requirement \eqref{eq:nablau} is satisfied. In the remainder, we will often use \eqref{uN:uf:bds} and \eqref{uN:der:bds:1} without explicit reference. We always apply Theorem \ref{BG} with $\Theta = 0$. The dependence on the time variable is of little importance in the computation below; we therefore drop it from the notation.

\paragraph{Third term of the right-hand side of \eqref{eq:cor2.6}.} First we rewrite the third term of the right-hand side of \eqref{eq:cor2.6} in a form to which we can apply Theorem \ref{BG}. With this aim, we expand the third term of the right-hand side of \eqref{eq:cor2.6} as 
\begin{align} \label{pf:5}
  &K \sum_{x\in\T_N^d} \left(\frac{ c_x^+(\eta)}{u_x}-
\frac{c_x^-(\eta)}{1-u_x}\right)\bar\eta_{x} \\\notag
&= K \sum_{x\in\T_N^d} \frac{ c_x^+(\eta) \eta_x }{u_x}
  - K \sum_{x\in\T_N^d} c_x^+(\eta)
  - K \sum_{x\in\T_N^d} \frac{c_x^-(\eta) \eta_x}{1-u_x}
  + K \sum_{x\in\T_N^d}  \frac{c_x^-(\eta) u_x}{1-u_x} \\\notag
  &=: I_1^+ + I_2^+ + I_1^- + I_2^-.
\end{align}
Each of the four terms $I_j^\pm$ ($j = 1,2$) can be written as
\[
  I_j^\pm = \sum_{x\in\T_N^d} a_{x}^{j, \pm} \tau_x h_j^\pm (\eta),
\]
where 
$$
h_1^\pm(\eta) := c^\pm(\eta) \eta_0, \qquad h_2^\pm(\eta) := c^\pm(\eta),
$$ and $$
a_{x}^{1, +} = \frac{K}{u_x}, \quad
a_{x}^{1, -} = \frac{-K}{1-u_x}, \quad
a_{x}^{2, \pm} = - u_x a_{x}^{1, \pm}.
$$
Note from \eqref{uN:uf:bds} that $|a_x^{j,\pm}| \leq CK$ for $j = 1,2$. Hence, $a_x^{j,\pm}$ and $h_j^\pm$ satisfy all conditions of Theorem \ref{BG}. Then, applying Theorem \ref{BG} with $\theta = 1$ to each of the four terms $I_j^\pm$, we obtain 
\begin{multline} \label{pf:4}
  \E^{\mu^N} \bigg[ \int_0^{T_1} K \sum_{x\in\T_N^d} \left(\frac{ c_x^+(\eta)}{u_x}-
\frac{c_x^-(\eta)}{1-u_x}\right)\bar\eta_{x} \, dt \bigg] \\
\leq \E^{\mu^N} \bigg[ \int_0^{T_1} \sum_{\pm} \sum_{j=1}^2 \sum_{x\in\T_N^d} a_{x}^{j,\pm} \big( \tilde h_j^\pm(u_{x}) + (\tilde h_j^\pm)'(u_{x}) \bar\eta_x \big) \, dt \bigg] + \e_3,
\end{multline}
where the error term $\e_3$ is given by
\begin{equation*} 
  \e_3 := C K \int_0^{T_1}  H(\mu_t^N | \nu_t^N) \, dt + C N^{d-\epsilon_3}
\end{equation*}
for some $C, \epsilon_3 > 0$ independent of $N$.

Next we simplify the integrand in the right-hand side of \eqref{pf:4}. Since $c^\pm$ are independent of $\eta_0$, we obtain
\begin{align*}
  \tilde h_1^\pm(\b) = \beta E^{\nu_\beta}[c^\pm(\eta)], \qquad 
  \tilde h_2^\pm(\b) = E^{\nu_\beta}[c^\pm(\eta)].
\end{align*}
Recalling $a_{x}^{2, \pm} = - u_x a_{x}^{1, \pm}$, we observe that 
\[
  \sum_{j=1}^2 a_{x}^{j,\pm} \tilde h_j^\pm(u_{x}) = 0
  \quad \text{and} \quad
  \sum_{j=1}^2 a_{x}^{j,\pm} (\tilde h_j^\pm)'(u_{x}) = a_{x}^{1,\pm} E^{\nu_\beta}[c^\pm(\eta)] \big|_{\b=u_x}.
\]
Then, recalling \eqref{eq:f}, 
\begin{multline*}
\sum_{\pm} \sum_{j=1}^2 a_{x}^{j,\pm} (\tilde h_j^\pm)'(u_{x})
= K \left(\frac{ E^{\nu_\beta}[c^+(\eta)]}{u_x} -
\frac{E^{\nu_\beta}[c^-(\eta)]}{1 - u_x}\right)\Big|_{\b=u_x}  \\
 = \frac1{\chi(u_x)} \Big( E^{\nu_\beta}[c^+(\eta)] (1-u_x) - E^{\nu_\beta}[c^-(\eta)] u_x \Big) \Big|_{\b=u_x} 
= \frac{f(u_x)}{\chi(u_x)}.
\end{multline*}

In the computations that follow, we apply Theorem \ref{BG} in a similar manner. We will therefore abbreviate the explanation above by saying that Theorem \ref{BG} allows us to replace the contribution of the third term of the right-hand side of \eqref{eq:cor2.6} with respect to the expectation in \eqref{pf:3} by 
\begin{equation*}
  \sum_{x\in\T_N^d}  \E^{\mu^N} \bigg[ \int_0^{T_1} K f(u_x) \frac{\bar\eta_x}{\chi(u_x)} \, dt \bigg]
\end{equation*}
at the cost of the error term $\e_3$. In addition, the computation of the integrand is \eqref{pf:4} is mostly the reverse of the expansion made in \eqref{pf:5}. This will also be the case in the computations that follow; we will therefore omit this part of the computation.

\paragraph{First term of the right-hand side of \eqref{eq:cor2.6}.}
Since this term is quadratic in $\bar\eta_x$, it will not give a contribution to the leading order term in \eqref{pf:3}. We show this by applying Theorem \ref{BG} in a similar manner as above. For the first term of the right-hand side of \eqref{eq:cor2.6}, we take the sum over $y$ in front (by writing it as $\sum_{\pm e_i}$; a sum with $2d$ terms), and treat each term separately. We assume for convenience that $y = x + e_i$; the term corresponding to $y = x - e_i$ can be treated analogously. This term is given by
\begin{equation*}
  \sum_{x\in\T_N^d} -\frac{ N^2 (u_y-u_x)^2 }{ 2 \chi(u_x) \chi(u_y) } c_{x,y}(\eta) \bar \eta_x \bar \eta_y
   = \sum_{j=1}^4 a_{x}^j \tau_x h^j(\eta),
\end{equation*}
where
\[
  h^1(\eta) = c_{0, e_i}(\eta) \eta_0 \eta_{e_i}, \quad
  h^2(\eta) = c_{0, e_i}(\eta) \eta_{e_i}, \quad
  h^3(\eta) = c_{0, e_i}(\eta) \eta_0, \quad
  h^4(\eta) = c_{0, e_i}(\eta)
\]
and
\[
  a_x^1 = -\frac{ N^2 (u_y-u_x)^2 }{ 2 \chi(u_x) \chi(u_y) }, \quad
  a_x^2 = -u_x a_x^1, \quad
  a_x^3 = -u_y a_x^1, \quad
  a_x^4 = u_x u_y a_x^1.
\]
By  \eqref{uN:uf:bds} and \eqref{uN:der:bds:1} 
we have $|a_x^j| \leq CK^{2/\si}$, and thus Theorem \ref{BG} applies with $\theta = \frac2\sigma$. Since $c_{0,e_i}$ does not depend on $\eta_0$ or $\eta_{e_i}$ by assumption (4),
we obtain 
$$
\sum_{j=1}^4 a_{x}^j \tilde h^j(\b) = a_{x}^1 E^{\nu_\beta}[c_{0,e_i}(\eta)] (\b-u_x)(\b-u_y).
$$
This value vanishes at $\beta = u_x$. Moreover, using again \eqref{uN:der:bds:1}, we obtain  
$$
\bigg| \frac d{d\b} \sum_{j=1}^4 a_{x}^j \tilde h^j(\b) \bigg| \bigg|_{\beta = u_x} 
= \big| a_x^1 E^{\nu_\b} [c_{0,e_i}(\eta)] (u_x - u_y) \big| \big|_{\beta = u_x} 
\leq C K^{3/\si} / N.
$$
Hence, using Theorem \ref{BG} we bound the contribution of the first term of the right-hand side of \eqref{eq:cor2.6} to the expectation in \eqref{pf:3} in absolute value by 
\begin{equation} \label{err1}
  \e_1 := C \bigg( K^{3/\si} N^{d-1} + N^{d - \epsilon_1} + K^{2/\si} \int_0^{T_1} H(\mu_t^N|\nu_t^N) \, dt \bigg).
\end{equation}
for some $C, \epsilon_1 > 0$ independent of $N$.

\paragraph{Second term of the right-hand side of \eqref{eq:cor2.6}.}
In order to absorb the prefactor $N^2$ in the deterministic coefficients, we decompose this term as $ I_1+I_2+I_3$  
where 
\begin{align*}
I_1=& \frac{N^2}2 \sum_x \frac1{\chi(u_x)} \sum_{y:|x-y|=1} c_{x,y}(\eta) (\eta_x-\eta_y) (u_y-u_x), \\
I_2=& \frac{N^2}2 \sum_x \sum_{y:|x-y|=1} c_{x,y}(\eta) \eta_y \Big(\frac1{\chi(u_x)} - \frac1{\chi(u_y)}\Big)
 (u_y-u_x), \\
I_3=& \frac{N^2}2 \sum_x \sum_{y:|x-y|=1} c_{x,y}(\eta)\Big(\frac{u_y}{\chi(u_y)} - \frac{u_x}{\chi(u_x)}\Big)
 (u_y-u_x). 
\end{align*}
Here, we simply denote $\Si_x$ for $\Si_{x\in \T_N^d}$ and apply a similar convention for $y$.
Using the gradient condition (5), we rewrite $I_1$ as
\begin{align*}
I_1=  \frac{N^2}2 \sum_x  \frac1{\chi(u_x)} \sum_{i=1}^d 
\sum_{y=x\pm e_i} (\t_x h_i-\t_y h_i)(u_y-u_x).
\end{align*} 
Then, changing summation variables to put $\tau_x h_i$ in front, we obtain
\begin{align*}
I_1
&= \frac{N^2}2 \sum_{i=1}^d \sum_x \t_x h_i \Big( \frac1{\chi(u_x)} [u_{x+e_i}    + u_{x-e_i} - 2 u_x] \\
&\hspace{40mm}   + \frac1{\chi(u_{x-e_i})} [u_{x-e_i} - u_x] 
   + \frac1{\chi(u_{x+e_i})} [u_{x+e_i} - u_x] \Big) \\
&= \sum_{i=1}^d \sum_x \t_x h_i \bigg[ 
\frac{N^2}{\chi(u_x)} [u_{x+e_i}    + u_{x-e_i} - 2 u_x] + 
\sum_\pm \frac{N^2}2
 \Big( \frac1{\chi(u_{x \pm e_i})} - \frac1{\chi(u_x)} \Big) [u_{x \pm e_i} - u_x] \bigg] \\
 &= I_1^2 + I_1^+ + I_1^-.
\end{align*}
Then, for each $i$ separately, we apply Theorem \ref{BG} to each of the three terms. Recall from \eqref{eq:P} and assumption (6) that for each of these three terms
$$  
\tilde h_i (\beta) = E^{\nu_\beta} [h_i(\eta)] = P_i(\beta) = P(\beta).
$$
The coefficients corresponding to $I_1^2$ and $I_1^\pm$ are respectively 
\begin{align*}  
  a_x^2 &= \frac{N^2}{\chi(u_x)} [u_{x+e_i}    + u_{x-e_i} - 2 u_x], \\
  a_x^\pm &= \frac{N^2}2
 \Big( \frac1{\chi(u_{x \pm e_i})} - \frac1{\chi(u_x)} \Big) [u_{x \pm e_i} - u_x].
\end{align*} 
By \eqref{uN:der:bds:1},\eqref{uN:der:bds:2}
and the fact that the function $\frac1\chi : [u_-, u_+] \to \R$ is Lipschitz continuous, we have $|a_x^2|, |a_x^\pm| \leq CK^{2/\si}$.

Then, applying Theorem \ref{BG} to the contributions of $I_1^2$ and $I_1^\pm$ to the expectation and the integral in $t$ in \eqref{pf:3} and afterwards reverting the decomposition of $I_1$, we obtain that $I_1$ can be replaced by $J_{1,1} + J_{1,2}$, where
\begin{align*}
  J_{1,1} &=  \frac{N^2}2 \sum_x  \frac1{\chi(u_x)} \sum_{y:|x-y|=1} (P(u_x)-P(u_y))(u_y-u_x), \\
J_{1,2} &=  \frac{N^2}2 \sum_x  \frac1{\chi(u_x)} 
\sum_{y:|x-y|=1} \big( P'(u_x) \bar\eta_x -P'(u_y)\bar\eta_y \big) (u_y-u_x),
\end{align*} 
and the error made by this replacement equals 
\begin{equation} \label{err2}
  \e_2 := C N^{d - \epsilon_2} + C K^{2/\si} \int_0^{T_1} H(\mu_t^N|\nu_t^N) \, dt
\end{equation}
for some $C, \epsilon_2 > 0$. 

Similarly, we apply Theorem \ref{BG} to $I_2$ and $I_3$. The treatment for these terms is easier, because almost no rewriting is necessary to select appropriate choices for $a_x$ and $h$. The only rewriting we employ is for $I_2$, where the symmetry in $x$ and $y$ allows for replacing $\eta_y$ by $\eta_x$. Then, for $I_2$ we take $\tau_x h_2(\eta) := c_{x,y}(\eta) \eta_x$
with $y=x\pm e_i$,
and note from \eqref{Pp:pos} and the reversibility assumption (4) that
\[
  \tilde h_2 (\beta) = E^{\nu_\beta} [c_{0,\pm e_i}(\eta) \eta_0] = P'(\beta) \beta.
\]
For $I_3$ we simply take $\tau_x h_3(\eta) := c_{x,y}(\eta)$ with $y=x\pm e_i$, which results by a similar computation in
\[
  \tilde h_3 (\beta) =  P'(\beta).
\]
The coefficients corresponding to $I_2$ and $I_3$ are respectively (overwriting previous notation)
\begin{align*}  
  a_x^2 &= \frac{N^2}2 \Big(\frac1{\chi(u_x)} - \frac1{\chi(u_y)}\Big)
 (u_y-u_x), \\
  a_x^3 &= \frac{N^2}2 \Big(\frac{u_y}{\chi(u_y)} - \frac{u_x}{\chi(u_x)}\Big)
 (u_y-u_x),
\end{align*} 
where $y=x\pm e_i$.
As for $I_1$, it follows from the Lipschitz bound on the functions $\frac1{\chi(u)}$ and $\frac u{\chi(u)}$ that $|a_x^2|, |a_x^3| \leq CK^{2/\si}$.

Then, applying Theorem \ref{BG} to the contribution of $I_2 + I_3$ to the expectation in \eqref{pf:3}, we obtain that $I_2$ and $I_3$ can be replaced respectively by $J_{2,1} + J_{2,2}$ and $J_{3,1} + J_{3,2}$, where
\begin{align*}  
J_{2,1}=& - \frac{N^2}2 \sum_x \sum_{y:|x-y|=1} P'(u_y)u_y
  \Big(\frac1{\chi(u_y)} -\frac1{\chi(u_x)} \Big) (u_y-u_x), \\
J_{2,2}=& - \frac{N^2}2 \sum_x \sum_{y:|x-y|=1} \bar\eta_x \big(P'(u_x)+P''(u_x)u_x\big)    
  \Big(\frac1{\chi(u_y)} -\frac1{\chi(u_x)} \Big) (u_y-u_x), \\
J_{3,1}=& \frac{N^2}2 \sum_x \sum_{y:|x-y|=1} P'(u_x)
  \Big(\frac{u_y}{\chi(u_y)} -\frac{u_x}{\chi(u_x)} \Big) (u_y-u_x), \\
J_{3,2}=& \frac{N^2}2 \sum_x \sum_{y:|x-y|=1} \bar\eta_x P''(u_x)  
  \Big(\frac{u_y}{\chi(u_y)} -\frac{u_x}{\chi(u_x)} \Big) (u_y-u_x),
\end{align*}
and the error made by this replacement is the same as $\e_2$ in \eqref{err2} for possibly different constants $C, \epsilon_2 > 0$. 

Next we rewrite the six terms $J_{j,k}$. We claim that $J_{1,1}+J_{2,1}+J_{3,1} = O(K^{3/\sigma} N^{d-1})$, 
so that its contribution can be absorbed in $\e_1$ in \eqref{err1}.
To prove this claim, we combine the sums in these three terms, and observe that the coefficient of $\frac{N^2}2 (u_y-u_x) = O(K^{1/\sigma} N)$ in the summand equals
\begin{align*}
& \frac1{\chi(u_x)} (P(u_x)-P(u_y)) 
- P'(u_y)u_y  \Big(\frac1{\chi(u_y)} -\frac1{\chi(u_x)} \Big)
+  P'(u_x)  \Big(\frac{u_y}{\chi(u_y)} -\frac{u_x}{\chi(u_x)} \Big) \\
&= \frac1{\chi(u_x)}(P(u_x)-P(u_y))
- P'(u_y)u_y \frac{\chi(u_x) - \chi(u_y)}{\chi(u_x)\chi(u_y)} \\
& \hskip 30mm +  P'(u_x)\frac{\chi(u_y)(u_y - u_x) +(\chi(u_x) - \chi(u_y))u_y}{\chi(u_x)\chi(u_y)}  \\
&= \frac1{\chi(u_x)}\Big( P'(u_x)(u_x-u_y) + O(|\nabla_{xy}u|^2) \Big)
- P'(u_y)u_y \frac{\chi'(u_x)(u_x-u_y) + O(|\nabla_{xy}u|^2)}{\chi(u_x)\chi(u_y)} \\
& \hskip 30mm +  P'(u_x)\frac{\chi(u_y)(u_y - u_x) +\chi'(u_x)(u_x - u_y)u_y
 + O(|\nabla_{xy}u|^2)}{\chi(u_x)\chi(u_y)}  \\
& =  (P'(u_x)- P'(u_y)) u_y \frac{\chi'(u_x)(u_x - u_y)}{\chi(u_x)\chi(u_y)}  
+ O(|\nabla_{xy}u|^2)\\
& = O(|\nabla_{xy}u|^2) = O(K^{2/\sigma} N^{-2}).
\end{align*}

Next we treat the remaining three terms. $J_{1,2}$ can be rearranged as 
\begin{align*}
J_{1,2}= \frac{N^2}2 \sum_x \sum_{y:|x-y|=1} \bar\eta_x P'(u_x) 
  \Big(\frac1{\chi(u_y)} +\frac1{\chi(u_x)} \Big) (u_y-u_x).
\end{align*}
Summing this together with $J_{2,2}$ and $J_{3,2}$, the coefficient of $\frac{N^2}2 \bar\eta_x (u_y-u_x) = O(K^{1/\sigma} N)$ in the summand
becomes
\begin{align*}
 & P'(u_x) \Big(\frac1{\chi(u_y)} +\frac1{\chi(u_x)} \Big)
 - \big(P'(u_x)+P''(u_x)u_x\big) \Big(\frac1{\chi(u_y)} -\frac1{\chi(u_x)} \Big) 
 + P''(u_x) \Big(\frac{u_y}{\chi(u_y)} -\frac{u_x}{\chi(u_x)} \Big)  \\
 & =  2 \frac{P'(u_x)}{\chi(u_x)} + P''(u_x)\frac{u_y-u_x}{\chi(u_x)} + P''(u_x) (u_y-u_x) \Big(\frac1{\chi(u_y)} -\frac1{\chi(u_x)} \Big).
\end{align*}
The third term is $O(K^{2/\sigma} N^{-2})$. For the first two terms we recognize the first and second order term in the Taylor expansion of $P$ at $u_x$, so that these two terms can be replaced by 
\begin{equation*}
  \frac2{\chi(u_x)} \frac{P(u_y) - P(u_x)}{u_y - u_x} + O(|u_y - u_x|^2).
\end{equation*}
There is no danger for division by zero thanks to the prefactor $\frac{N^2}2 \bar\eta_x (u_y-u_x)$. Note also that $|u_y-u_x|^2 = O(K^{2/\sigma}N^{-2})$. In conclusion, 
\begin{align*}
  J_{1,2} + J_{2,2} + J_{3,2}
  &= \sum_x \bigg( N^2 \sum_{y:|x-y|=1} (P(u_y) - P(u_x)) \bigg) \frac{\bar \eta_x}{\chi(u_x)} + O(K^{3/\sigma} N^{d-1}) \\
  &= \sum_x \Delta^N P(u_x) \frac{\bar \eta_x}{\chi(u_x)}  + O(K^{3/\sigma} N^{d-1}).
\end{align*}

Finally, putting everything together, the contribution of the second term of the right-hand side of \eqref{eq:cor2.6} to the expectation in \eqref{pf:3} can be replaced by 
\begin{equation*}
  \sum_{x\in\T_N^d}  \E^{\mu^N} \bigg[ \int_0^{T_1} \Delta^N P(u_x) \frac{\bar \eta_x}{\chi(u_x)} \, dt \bigg]
\end{equation*}
at the cost of an error term of the form \eqref{err1}.

\paragraph{Conclusion.}
By substituting our computations above for the contribution of each of the four terms of the right-hand side of \eqref{eq:cor2.6} in the expectation in \eqref{pf:3}, we obtain that for all $N$ large enough
\begin{multline*} 
  H(\mu_{T_1}^N|\nu_{T_1}^N)
  \le \E^{\mu^N} \bigg[ \int_0^{T_1} \sum_{x\in\T_N^d} \big[ \Delta^N P(u_x(t)) + K f(u_x(t)) - \partial_t u_x(t) \big] \frac{\bar \eta_x(t)}{\chi(u_x(t))} \, dt \bigg] \\
  + C \bigg( K^{3/\si} N^{d-1} + N^{d - \epsilon} + K^{2/\si} \int_0^{T_1} H(\mu_t^N|\nu_t^N) \, dt \bigg)
\end{multline*}
for some constants $C, \epsilon > 0$ independent of $N$. Since $u_x(t)$ satisfies equation \eqref{eq:HD-discre}, the integrand in the first term vanishes. Then, since $T_1 \in (0, T]$ is arbitrary, Theorem \ref{thm:EstHent} follows from Gronwall's lemma after taking $\delta > 0$ in $K \leq \delta (\log N)^{\si/2}$ small enough with respect to $T$, $C$ and $\epsilon$.

\section{Proof of the Boltzmann-Gibbs principle}
\label{s:BG:pf}

We prove Theorem \ref{BG} in Section \ref{s:BG:pf:pf}. Its main two building blocks are Lemmas \ref{l:L10:6} and \ref{l:L10:9}, which we state and prove in Section \ref{s:BG:pf:lemmas}. The proof of Lemma \ref{l:L10:6} is loosely based on \cite[{Section}\ 7]{EFHPS} and the proof of Lemma \ref{l:L10:9} relies on an equivalence of ensembles estimate and on a concentration inequality (cf.\ \cite[Lemma 3.6]{FT}). Since the time dependence of $f_{t,x}$ and $u_{t,x}$ is not essential in any of the computations that follow, we drop the $t$ from the subscript. The time dependence of the coefficients $a_{t,x}$, however, will have a significant impact on the proof (as can be anticipated from \eqref{BG:assn:atx}), and therefore we keep it in the subscript of $a$. 

\subsection{Key lemmas}
\label{s:BG:pf:lemmas}

We split
\begin{equation} \label{BG:pf:0}
  \E^{\mu^N} \bigg |\int_0^T \sum_{x \in \T_N^d} a_{t,x} f_x dt \bigg |
  \leq \E^{\mu^N} \bigg |\int_0^T \sum_{x \in \T_N^d} a_{t,x} m_x dt \bigg |
  + \E^{\mu^N} \bigg |\int_0^T \sum_{x \in \T_N^d} a_{t,x} E^{\nu_\beta}[f_x \mid \eta_x^\ell] dt \bigg |,
\end{equation}
where
\begin{align*}
\beta &:= \frac12, \\
  m_x &:= f_x - E^{\nu_\beta}[f_x \mid \eta_x^\ell], \\
  \eta_x^\ell &:= \frac1{\ell_*^d} \sum_{y \in \Lambda_{\ell,x}} \eta_y,
  \\ 
  \ell_* &:= 2 \ell + 1, \\
  \Lambda_{\ell,x} &:= \{ y \in \T_N^d : |y - x| \leq \ell \}
\end{align*}
and $\ell = \ell(N)$ is such that $1 \ll \ell \ll N$ as $N \to \infty$. Note that $|\Lambda_{\ell,x}| = \ell_*^d$ and that $\eta_x^\ell$ is a local average of $\eta$. We estimate both terms in \eqref{BG:pf:0} separately in Lemmas \ref{l:L10:6} and \ref{l:L10:9}.

\begin{lem} \label{l:L10:6}
Let $\gamma = \gamma(N) > 0$ and $\e = \e(N) \in (0, T/2)$. If  
\begin{equation} \label{l:L10:6:assn}
   \gamma \e^{-\Theta} K^\theta \ell_*^{d+2} \ll N^2
  \quad \text{as } N \to \infty, 
\end{equation} 
then for all $N$ large enough
  \[
      \E^{\mu^N} \bigg |\int_0^T \sum_{x \in \T_N^d} a_{t,x} m_x dt \bigg |
      \leq C N^d \Big( \e^{1 - \Theta} K^\theta + \frac K\gamma + \frac{ \gamma  \ell_*^{d+2} K^{2\theta} }{ \e^{2\Theta} N^2 } \Big),
  \]
where the constant $C$ is independent of $N$.
\end{lem}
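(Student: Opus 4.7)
The plan is to prove Lemma \ref{l:L10:6} by a standard time-truncation combined with a one-block spectral-gap argument against the reference measure $\nu_\b := \nu_{1/2}^{\otimes \T_N^d}$ (the uniform product Bernoulli measure, which is reversible for $L_K$ by assumption~(4)). The three terms in the bound correspond, respectively, to (i) the singular endpoints $[0,\e]\cup[T-\e,T]$ of $a_{t,x}$, (ii) the crude entropy bound $H(\mu^N|\nu_\b) \le N^d\log 2$, and (iii) the Kawasaki spectral-gap contribution on boxes $\La_{\ell,x}$. For (i), since $h$, $\tilde h$, $\tilde h'$ are bounded on $[u_-,u_+]$ and $\eta_x \in \{0,1\}$, one has $|m_x| \le C$ uniformly; combined with \eqref{BG:assn:atx},
\begin{equation*}
  \E^{\mu^N}\left|\int_{[0,\e]\cup[T-\e,T]}\sum_{x\in\T_N^d} a_{t,x} m_x \,dt \right|
  \le CN^d K^\th \int_0^\e \frac{dt}{t^\k}
  \le CN^d\e^{1-\k} K^\th,
\end{equation*}
yielding the first term.

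For $t\in[\e,T-\e]$ I would apply the path-space entropy inequality: with $G_t := \sum_x a_{t,x}m_x$ and any $\ga > 0$,
\begin{equation*}
  \ga\,\E^{\mu^N}\left|\int_\e^{T-\e} G_t\,dt\right|
  \le H(\mu^N|\nu_\b) + C + \log\E^{\nu_\b}\left[\cosh\Big(\ga\int_\e^{T-\e}G_t\,dt\Big)\right].
\end{equation*}
The entropy term satisfies $H(\mu^N|\nu_\b) \le N^d\log 2 \le CN^d K$ (since $K\ge 1$), producing the middle $CN^d K/\ga$ contribution. The log-cosh is controlled via the Feynman--Kac formula and the Rayleigh--Ritz variational principle; discarding the non-negative Glauber Dirichlet form $K\mathcal{D}_G$, this gives an upper bound of $\int_\e^{T-\e}\sup_f\{\ga\langle G_t,f^2\rangle_{\nu_\b}-2N^2\mathcal{D}_K(f,\nu_\b)\}\,dt$, where the supremum is over $L^2(\nu_\b)$-densities $f^2$. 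For the inner supremum, the classical one-block estimate applies: each $m_x$ is supported in a neighborhood of $x$ of side $O(\ell)$ and satisfies $E^{\nu_\b}[m_x\mid\eta_x^\ell] = 0$, so the Poincaré inequality on $\La_{\ell,x}$ (spectral gap of order $\ell^{-2}$) combined with Young's inequality gives, for any $\la > 0$,
\begin{equation*}
  \ga|a_{t,x}|\,|\langle m_x,f^2\rangle_{\nu_\b}|
  \le \la\,\mathcal{D}_{K,\La_{\ell,x}}(f,\nu_\b) + \frac{C\ga^2 a_{t,x}^2\,\ell_*^2}{\la}.
\end{equation*}
Summing over $x$ and using $|a_{t,x}| \le CK^\th/\e^\k$ together with the fact that each bond lies in $O(\ell_*^d)$ translated boxes, the Dirichlet-form sum becomes $\la\,\ell_*^d\,\mathcal{D}_K(f,\nu_\b)$. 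The choice $\la = N^2/\ell_*^d$, permissible precisely by assumption \eqref{l:L10:6:assn}, absorbs this into $2N^2\mathcal{D}_K(f,\nu_\b)$, and the residual $L^2$-error, integrated in $t$ and divided by $\ga$, yields the third contribution $CN^d\ga K^{2\th}\ell_*^{d+2}/(\e^{2\k}N^2)$.

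The principal technical obstacle is the Poincaré inequality on $\La_{\ell,x}$ for the $\eta$-dependent exchange rates $c_{x,y}(\eta)$: the classical $\ell^{-2}$ spectral gap is standard only for simple symmetric exclusion with constant rates. This is resolved via the pointwise lower bound \eqref{cxy:LB}, yielding the Dirichlet-form comparison $\mathcal{D}_K(f,\nu_\b) \ge c_*\,\mathcal{D}_K^{\rm sym}(f,\nu_\b)$, which reduces the question to the classical spectral gap for nearest-neighbor simple symmetric exclusion on a cube.
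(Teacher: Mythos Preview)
Your overall route---time truncation on $[0,\e]\cup[T-\e,T]$, the entropy inequality against $\nu_\b=\nu_{1/2}$, Feynman--Kac, reduction to SSEP via $c_{x,y}\ge c_*$, and a localized spectral-gap/one-block bound---coincides with the paper's. But there is one genuine error.

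You write that after Feynman--Kac one may ``discard the non-negative Glauber Dirichlet form $K\mathcal D_G$''. What actually appears in the variational formula is $K\langle -L_G f,f\rangle_{\nu_\b}$, and this equals $K\mathcal D_G(f;\nu_\b)$ \emph{only} when $\nu_\b$ is invariant for $L_G$. In general it is not: the flip rates $c^\pm$ have no reason to satisfy detailed balance under $\nu_{1/2}$. A short computation (using $\nu_\b(\eta^x)=\nu_\b(\eta)$) gives
\[
\langle -L_G f,f\rangle_{\nu_\b}
=\mathcal D_G(f;\nu_\b)+\tfrac12\sum_{x}E^{\nu_\b}\big[(c_x(\eta)-c_x(\eta^x))f(\eta)^2\big],
\]
and the correction term can be negative. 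The paper handles this by the crude bound $\langle -L_G\psi,\psi\rangle_{\nu_\b}\ge -C N^d$ (via $ab\le\tfrac12(a^2+b^2)$ and $E^{\nu_\b}[\psi(\eta^x)^2]=1$), which after multiplying by $K$ and dividing by $\ga$ produces exactly the $CN^dK/\ga$ term. Your derivation of that same term from ``$H(\mu^N|\nu_\b)\le N^d\log 2\le CN^dK$'' is therefore accidental: the factor $K$ genuinely comes from the Glauber correction, not from padding the entropy.

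On the one-block step, your ``Poincar\'e $+$ Young'' inequality is really a restatement of the perturbation bound that the paper obtains via the Rayleigh estimate \cite[App.~3, Thm.~1.1]{KL} after conditioning on the particle number in $\La_{\ell,x}$; the inputs ($E^{\nu_\b}[m_x\mid\eta_x^\ell]=0$ and $\gap\gtrsim\ell_*^{-2}$) and the resulting error $C\ga^2K^{2\th}\ell_*^{d+2}/(\e^{2\k}N^2)$ are identical. Just be aware that the inequality you wrote does not follow from a naive Cauchy--Schwarz on $\langle m_x,f^2\rangle$; one needs either the Rayleigh estimate or an equivalent $H_{-1}$ bound, and this is where assumption~\eqref{l:L10:6:assn} is used to keep the denominator in that estimate bounded away from zero.
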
 

\begin{proof}
Applying the entropy inequality with respect to $\nu_\beta$ and with the given constant $\gamma$ we obtain
\begin{equation} \label{BG:pf:5}
  \E^{\mu^N} \bigg |\int_0^T \sum_{x \in \T_N^d} a_{t,x} m_x dt \bigg |
  \leq \frac1\gamma H(\mu^N | \nu_\beta) + \frac1\gamma \log \E^{\nu_\beta} \bigg[ \exp \bigg| \gamma \int_0^T \sum_{x \in \T_N^d} a_{t,x} m_x  dt \bigg| \bigg].
\end{equation}
For the first term, we estimate (inserting $\beta = \frac12$)
\[
  H(\mu^N | \nu_\beta)
  = H(\mu^N | \nu_\beta)
  \leq \max_{\eta \in \mathcal X_N} \log \frac{d \mu^N}{d \nu_\beta}(\eta)
  \leq \max_{\eta \in \mathcal X_N} \log \dfrac{1}{\nu_\beta(\eta)}
  = N^d \log 2,
\]
where we used $\mu^N(\eta)\le1$ and $\nu_\beta(\eta)=2^{-N^d}$ for any $\eta\in\mathcal X_N$.
To bound the second term in \eqref{BG:pf:5}, we split the integral over $(0,T)$ into three integrals over the intervals $(0, \e)$, $(\e, T-\e)$ and $(T-\e, T)$. For the integrals over $(0, \e)$ and $(T-\e, T)$, we simply use \eqref{BG:assn:atx} and the fact that $m_x$ is a uniformly bounded random variable to estimate the summand as
\begin{equation} \label{BG:pf:6}
  \sup_{x \in \T_N^d} \sup_{\eta \in \mathcal X_N} |a_{t,x} m_x(\eta)|
  \leq C \frac{ K^\theta }{[t(T-t)]^\Theta}.
\end{equation}
Then, we bound the integral over $(0, \e)$ as
\begin{equation*}
  \bigg| \int_0^\e \sum_{x \in \T_N^d} a_{t,x} m_x dt \bigg|
  \leq C K^\theta N^d \int_0^\e \frac1{t^\Theta} \, dt
  = C_\Theta K^\theta N^d \e^{1-\Theta}.
\end{equation*}
The integral over $(T-\e, T)$ satisfies a similar bound. Hence, the second term in \eqref{BG:pf:5} is bounded from above by
\begin{equation} \label{BG:pf:7}
  \frac1\gamma \log \E^{\nu_\beta} \bigg[ \exp \bigg| \gamma \int_\e^{T-\e} \sum_{x \in \T_N^d} a_{t,x} m_x  dt \bigg| \bigg]
  + C K^\theta N^d \e^{1-\Theta}.
\end{equation}

Next we bound the first term in \eqref{BG:pf:7}. We first use $e^{|x|} \leq e^x + e^{-x}$, and then apply the Feynman-Kac formula (see \cite[Appendix 1, Lemma 7.2]{KL}, whose proof does not require $\nu_\beta$ to be an invariant measure of $L_N$). This yields  
\begin{multline*}
  \log \E^{\nu_\beta} \bigg[ \exp \bigg| \gamma \int_\e^{T-\e} \sum_{x \in \T_N^d} a_{t,x} m_x  dt \bigg| \bigg] \\
  \leq \sup_\pm \int_\e^{T-\e} \sup_g \Big( \gamma \sum_{x \in \T_N^d} \langle \pm a_{t,x} m_x, g \rangle_{\nu_\beta} - \langle -L_N \sqrt g, \sqrt g \rangle_{\nu_\beta} \Big) dt + \log 2
\end{multline*}
where the supremum over $g$ is over all densities with respect to $\nu_\beta$. The constant $\log 2$ can be absorbed in the second error term in Lemma \ref{l:L10:6}; we neglect it in the remainder. We also neglect $\sup_\pm$, because the proof below works verbatim when $m_x$ is replaced by $-m_x$. Then, reflecting on \eqref{BG:pf:5}, we obtain
\begin{multline} \label{pf:1}
  \E^{\mu^N} \bigg |\int_0^T \sum_{x \in \T_N^d} a_{t,x} m_x dt \bigg| \\
  \leq \frac{N^d}\gamma {\log 2}
  + C K^\theta N^d \e^{1-\Theta}
  + \frac1\gamma \int_\e^{T-\e} \sup_g \Big( \gamma \sum_{x \in \T_N^d} \langle a_{t,x} m_x, g \rangle_{\nu_\beta} - \langle -L_N \sqrt g, \sqrt g \rangle_{\nu_\beta} \Big) dt.
\end{multline} 

It is left to bound the integral in the right-hand side of \eqref{pf:1}. With this aim, we set $\psi := \sqrt g$ and expand
\begin{equation} \label{BG:pf:2}
  \langle -L_N \psi, \psi \rangle_{\nu_\beta}
  = N^2 \langle -L_K \psi, \psi \rangle_{\nu_\beta} + K \langle -L_G \psi, \psi \rangle_{\nu_\beta}.
\end{equation}
We start with bounding the Glauber part from below. By the definition of $L_G$, 
\begin{align*}
  \langle -L_G \psi, \psi \rangle_{\nu_\beta}
  &= \sum_{x \in \T_N^d} E^{\nu_\beta} {\big[} c_x(\eta) \big\{ \psi(\eta)^2 - \psi(\eta)\psi(\eta^x) \big\} {\big]} \\
  &\geq \frac12 \sum_{x \in \T_N^d} E^{\nu_\beta} {\big[} c_x(\eta) \big\{ \psi(\eta)^2 - \psi(\eta^x)^2 \big\} {\big]} \\
  &\geq - \frac12 \Big( \max_{\eta \in \mathcal X_N} c(\eta) \Big) \sum_{x \in \T_N^d} E^{\nu_\beta} {\big[} \psi(\eta^x)^2 {\big]}.
\end{align*}
Since the flip rate $c$ is a local function, the prefactor above is finite and independent of $N$. For the summand,
recalling that $\beta = \frac12$ and that $\psi^2 = g$ is a density with respect to $\nu_\beta$, we obtain
\[
  E^{\nu_\beta} {\big[} \psi(\eta^x)^2 {\big]}
  = E^{\nu_\beta} {\big[} \psi(\eta)^2 {\big]}
  = 1.
\]
Hence,
\begin{align*}
  \langle -L_G \psi, \psi \rangle_{\nu_\beta}
  \geq - C \sum_{x \in \T_N^d} 1
  = - C N^d.
\end{align*}

Next we treat the Kawasaki term in \eqref{BG:pf:2}. Recalling the uniform lower bound on $c_{x,y}(\eta)$ in \eqref{cxy:LB},  
\begin{align*}
  \langle -L_K \psi, \psi \rangle_{\nu_\beta}
  &= \frac14 \sum_{x\in\T_N^d} \sum_{|y - x| = 1}  \int_{\mathcal{X}_N} c_{x,y}(\eta) \{\psi(\eta^{x,y}) - \psi(\eta)\}^2 d\nu_\beta  \\
  &\geq \frac1{4C} \sum_{x\in\T_N^d} \sum_{|y - x| = 1} \underbrace{ \int_{\mathcal{X}_N} \{\psi(\eta^{x,y}) - \psi(\eta)\}^2 d\nu_\beta. }_{E_{x,y} (\psi)}
\end{align*}
The right-hand side is the Dirichlet form of the SSEP (symmetric simple exclusion process). While bounding it from below is standard, we need a quantitative estimate, and therefore we provide this estimate in detail.

Next, we localize the Dirichlet form of the SSEP:
\begin{align*}
  \frac1{4C} \sum_{x\in\T_N^d} \sum_{|y - x| = 1} E_{x,y} (\psi)
  &= \frac1{4C} \sum_{x\in\T_N^d} \sum_{|y - x| = 1} \frac1{\ell_*^d} \sum_{z \in \Lambda_{\ell,x}} E_{x,y} (\psi) \\
  &= \frac1{C \ell_*^d} \sum_{z\in\T_N^d} \frac14 \sum_{x \in \Lambda_{\ell,z}} \sum_{|y - x| = 1} E_{x,y} (\psi) \\
  &\geq \frac1{C \ell_*^d} \sum_{z\in\T_N^d} {\bigg(}\frac14 \sum_{\substack{ x,y \in \Lambda_{\ell,z} \\ |y - x| = 1 }} E_{x,y} (\psi) {\bigg)}
  =: \frac1{C \ell_*^d} \sum_{z\in\T_N^d} D_{\ell, z}(\psi),
\end{align*}
where the inequality is justified by removing several bonds at the boundary of $\Lambda_{\ell,x}$ and using that $E_{x,y} {(\psi)} \geq 0$. Note that 
\[
  D_{\ell, z}(\psi)
  = \frac14 \sum_{\substack{ x,y \in \Lambda_{\ell,z} \\ |y - x| = 1 }} \int_{\mathcal{X}_N} \{\psi(\eta^{x,y}) - \psi(\eta)\}^2 d\nu_\beta
  = E^{\nu_\beta} [\psi(- L_{\ell, z} \psi)],
\]
where $L_{\ell, z}$ is the generator of the SSEP restricted to $\Lambda_{\ell,z}$. 
Using the estimate above on $\langle -L_K \psi, \psi \rangle_{\nu_\beta}$, we obtain from \eqref{BG:pf:2} 
\[
  \langle -L_N \psi, \psi \rangle_{\nu_\beta}
  \geq \frac{N^2}{C \ell_*^d} \sum_{x\in\T_N^d} D_{\ell, x}(\psi) - C'KN^d.
\]
Using this, the supremum in \eqref{pf:1} is bounded by
\begin{multline} \label{BG:pf:3}  
  \sup_g \bigg( \gamma \sum_{x \in \T_N^d} \langle a_{t,x} m_x, g \rangle_{\nu_\beta} - \langle -L_N \sqrt g, \sqrt g \rangle_{\nu_\beta} \bigg) \\
  \leq \sum_{x \in \T_N^d} \sup_g \Big( \gamma \langle a_{t,x} m_x, g \rangle_{\nu_\beta} - \frac{N^2}{C \ell_*^d} D_{\ell, x}(\sqrt g) \Big)
    + C'KN^d.
\end{multline}

Next, we further bound from above the supremum in the right-hand side. The SSEP generated by $L_{\ell, x}$ conserves the number of particles, and is therefore reducible on $\mathcal \{0,1\}^{\Lambda_{\ell,x}}$. To make it irreducible, we condition on the number of particles. For this purpose, for each $\ell\in\N$, $x\in\T_N^d$ and $j\le\ell_*^d$, let
\begin{align*}
   {\mathcal X_{\ell, x,j}}
  &:= \Big\{ \eta \in \{0, 1\}^{\Lambda_{\ell,x}} : \sum_{{y} \in \Lambda_{\ell,x}} {\eta_y} = j \Big\}, && \\
  L_{\ell, x, j} f(\eta) 
  &:= L_{\ell, x} f(\eta) = \frac12 \sum_{ \substack{ y, z\in \Lambda_{\ell,x}  \\ |y-z|=1 }}
\left\{  f\left(  \eta^{y,z}\right)  -f\left(\eta\right)  \right\} 
  && \text{for all } f:\mathcal X_{\ell, x,j}\to\R, \\
  \nu_{\ell, x, j} 
  &:= \nu_\beta \Big( \cdot \mid \sum_{{y} \in \Lambda_{\ell,x}} {\eta_y} = j \Big)  \quad \text{and} && \\
  D_{\ell, x, j}(f) 
  &:= E^{\nu_{\ell, x, j}} [f(- L_{\ell, x} f)] 
  && \text{for all } f:\mathcal X_{\ell, x,j}\to\R
\end{align*}
be respectively the corresponding configuration space, generator, canonical invariant measure and Dirichlet form. 
Then, conditioning in the supremum in the right-hand side of \eqref{BG:pf:3} the measure $g \nu_\beta$ on the event $\{\sum_{y\in\Lambda_{\ell,x}} \eta_y = j\}$, and then taking $\max_j$, we estimate this supremum from above by
\begin{equation} \label{BG:pf:4} 
  \frac{N^2}{C \ell_*^d} \max_{j \leq \ell_*^d} \sup_g \Big( C \frac{\gamma  \ell_*^d}{N^2} \langle a_{t,x} m_x, g \rangle_{\nu_{\ell, x, j}} - D_{\ell, x, j}(\sqrt g)  \Big),
\end{equation}
where now $g$ is a density with respect to $\nu_{\ell, x, j}$. Since $|a_{t,x} m_x|$ is uniformly bounded by $C K^\theta \e^{-\Theta}$ for $t \in [\e, T-\e]$ (recall \eqref{BG:pf:6}), the conditions for the Rayleigh estimate in \cite[Appendix 3, Theorem 1.1]{KL} are met. Applying this estimate, we obtain the following upper bound on \eqref{BG:pf:4} 
\begin{equation} \label{BG:pf:1}
  C \frac{\gamma^2 \ell_*^d}{N^2} \max_{j \leq \ell_*^d} \frac{ \langle (-L_{\ell, x, j})^{-1} (a_{t,x} m_x), a_{t,x} m_x \rangle_{\nu_{\ell, x, j}} }{ 1 - C' K^\theta \e^{-\Theta} \gamma \ell_*^d / (N^2 \gap(j,\ell)) },
\end{equation}
provided that the denominator is strictly positive. Here, $\gap(j,\ell)$ is the spectral gap of $L_{\ell,x,j}$. By \cite[Corollary A.1]{FUY} (or, alternatively, \cite[Section 8]{Qua}) we have $\gap(j,\ell) \geq C / \ell_*^2$. Then, from the given bound in \eqref{l:L10:6:assn} it follows that the denominator in \eqref{BG:pf:1} is larger than $\frac12$ for all $N$ large enough. For the numerator, we bound
\[
  \langle (-L_{\ell, x, j})^{-1} (a_{t,x} m_x), a_{t,x} m_x \rangle_{\nu_{\ell, x, j}}
  \leq \frac1{\gap(j,\ell)} \sup_{t,x,\eta} | a_{t,x} m_x |^2
  \leq C K^{2\theta} \e^{-2\Theta} \ell_*^2.
\]
In conclusion, for all $N$ large enough, \eqref{BG:pf:1} is bounded from above by
\begin{align*}
  C \frac{\gamma^2 \ell_*^{d+2} K^{2\theta}}{\e^{2\Theta} N^2}. 
\end{align*}
Collecting all estimates above, Lemma \ref{l:L10:6} follows.
\end{proof}

Next, in Lemma \ref{l:L10:9} below, we bound the second term in \eqref{BG:pf:0}. In preparation for this, we establish an equivalence of ensembles lemma. To state it in our context, recall $\beta = \frac{1}{2}$, the function $f_x$ (cf.\ \eqref{f_tx eq}) and is expectation (cf.\ \eqref{ht})
$$
  \tilde f_x (\rho) = E^{\nu_\rho}[f_x]
  \qquad \text{for all } 0 \leq \rho \leq 1.
$$

\begin{lem}[Equivalence of ensembles] \label{l:GJ13:P31}
There exists $C > 0$ such that for all $N$ large enough
  \[
      \max_{x \in \T_N^d} \sup_{\eta \in \mathcal X_N} \left| E^{\nu_\beta}[f_x \mid \eta_x^\ell]
      - \tilde f_x (\eta_x^\ell) + \frac{\chi(\eta_x^\ell)}{2 \ell_*^d} \frac{\partial^2 \tilde f_x}{\partial \rho^2 } (\eta_x^\ell) \right| 
      \leq \frac C{\ell^{2d}}.
  \]
\end{lem}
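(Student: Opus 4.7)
The plan is a second-order equivalence-of-ensembles argument: reduce the conditional expectation to one under a canonical measure on $\Lambda_{\ell,x}$, then expand it in $1/\ell_*^d$ against the grand-canonical Bernoulli measure.

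First, since $h$ has universal finite support $\Lambda_h$, for all $\ell$ larger than the diameter of $\Lambda_h$ the function $f_x$ depends on $\eta$ only through the sites in the fixed finite set $\Lambda^* := \{x\} \cup (x + \Lambda_h) \subset \Lambda_{\ell,x}$. Taking $\beta = \tfrac12$ (so $\nu_\beta$ is the uniform measure), $L := \ell_*^d$, $\rho := \eta_x^\ell$ and $j := \rho L \in \{0, 1, \ldots, L\}$, the conditional law $\nu_\beta(\cdot \mid \eta_x^\ell = \rho)$ restricted to $\Lambda_{\ell,x}$ is exactly the canonical measure $\nu_{\ell,x,j}$ defined before \eqref{BG:pf:4}; crucially this reduction is independent of the choice of $\beta$. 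Since $L^{-2}$ and $\ell^{-2d}$ agree up to a constant, it therefore suffices to prove
\[
   \Big| E^{\nu_{\ell,x,j}}[f_x] - \tilde f_x(\rho) + \tfrac{\chi(\rho)}{2L}\tilde f_x''(\rho) \Big| \;\le\; C/L^{2}
\]
uniformly in $x$ and in $j$.

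The core computation is a Radon-Nikodym expansion. With $k := |\Lambda^*|$ and $n(\omega) := \sum_{y \in \Lambda^*}\omega_y$, the explicit formula
\[
   E^{\nu_{\ell,x,j}}[f_x] = \sum_{\omega\in\{0,1\}^{\Lambda^*}} f_x(\omega)\, \rho^{n(\omega)}(1-\rho)^{k-n(\omega)}\, R_{n(\omega)},
   \quad R_n := \frac{P(\mathrm{Bin}(L-k,\rho)=j-n)}{P(\mathrm{Bin}(L,\rho)=j)},
\]
expresses the canonical expectation as a grand-canonical expectation weighted by the density ratio $R_n$. For $\rho$ bounded away from $\{0,1\}$, a second-order Edgeworth expansion for the binomial mass function yields $R_n = 1 + A(n,\rho)/L + O(L^{-2})$ uniformly in $n \in \{0, \ldots, k\}$, with $A(n,\rho)$ at most quadratic in $n$ and with coefficients that are smooth functions of $\rho$ involving $\chi(\rho)$, $\chi'(\rho)$ and $\chi(\rho)^{-1}$. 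Substituting and using the Bernoulli differentiation identity $\tilde F'(\rho) = \chi(\rho)^{-1}\sum_{y \in \Lambda^*} E^{\nu_\rho}[F \bar\eta_y]$ applied first to $F = f_x$ and then iterated to identify $\tilde f_x''(\rho)$, the $L^{-1}$ contribution collects by a classical direct calculation into exactly $-\tfrac12 \chi(\rho)\tilde f_x''(\rho)$, proving the claim in the bulk regime.

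The main obstacle is extending the uniform bound to the boundary regime $\rho \approx 0$ and $\rho \approx 1$, where $\chi$ vanishes and the local CLT degenerates. At the exact endpoints $j \in \{0, L\}$ the canonical measure is a Dirac mass concentrated on the all-zero or all-one configuration and the claimed identity is trivially exact. For $\min(j, L-j) = O(1)$ the canonical probabilities $\nu_{\ell,x,j}(\eta_{\Lambda^*} = \omega)$ are explicit ratios of binomial coefficients that admit a direct term-by-term expansion; one checks that both $E^{\nu_{\ell,x,j}}[f_x] - \tilde f_x(\rho)$ and $\chi(\rho)\tilde f_x''(\rho)/L$ are individually $O(1/L)$ in this regime with matching leading terms, so their difference is $O(1/L^2)$. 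Together with the bulk estimate this covers all $\rho \in \{0, 1/L, \ldots, 1\}$, and the supremum over $x$ and $\eta$ adds no cost: the integrand depends on $\eta$ only through $\rho$ and on $x$ only through the uniformly bounded parameter $u_x \in [u_-, u_+]$, entering as a polynomial of fixed degree with $N$-independent coefficients.
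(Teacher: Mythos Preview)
Your approach differs from the paper's and carries a real gap. The paper reduces by linearity to the monomials $g_k(\eta)=\prod_{j=1}^k \eta_{x_j}$ and then invokes \cite[Proposition~3.1]{GJ}, whose proof rests on the \emph{exact} hypergeometric moment
\[
E^{\nu_{\ell,x,j}}\!\big[\eta_{x_1}\cdots\eta_{x_k}\big]=\frac{(j)_k}{(L)_k},\qquad (a)_k:=a(a-1)\cdots(a-k+1),
\]
which is a polynomial identity in $j$ and $L$ and can be Taylor-expanded in $1/L$ \emph{uniformly} over all $j\in\{0,\dots,L\}$. No bulk/boundary dichotomy is needed.

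Your Edgeworth route, by contrast, covers only two disjoint regimes: $\rho$ bounded away from $\{0,1\}$, and $\min(j,L-j)=O(1)$. The intermediate regime where $\min(j,L-j)\to\infty$ but $\rho\to 0$ (e.g.\ $j\sim L^{1/2}$) is not addressed; there the local CLT error terms carry inverse powers of $L\chi(\rho)$ and the claimed $O(L^{-2})$ remainder for $R_n$ is not justified as stated. Your boundary paragraph also overstates the sizes: for $j=O(1)$ both $E^{\nu_{\ell,x,j}}[f_x]-\tilde f_x(\rho)$ and $\chi(\rho)\tilde f_x''(\rho)/L$ are already $O(L^{-2})$, not $O(L^{-1})$ with matching leading terms. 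The fix is simple and in fact implicit in your own Radon--Nikodym formula: since $\rho^{n}(1-\rho)^{k-n}R_n=\binom{L-k}{j-n}/\binom{L}{j}=(j)_n(L-j)_{k-n}/(L)_k$ exactly, you never needed the Edgeworth step---expanding this rational function directly gives the uniform $O(L^{-2})$ bound and reproduces the paper's argument.
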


\begin{proof}
In a one-dimensional setting, Lemma \ref{l:GJ13:P31} is stated and proved in \cite[Proposition 3.1]{GJ}. In what follows, we mention the minor modifications by which this proof extends to the higher dimensional setting in Lemma \ref{l:GJ13:P31}. 

Since $x$ plays no important role, we choose it as $0$, and write $f = f_x$, $\eta^\ell = \eta_x^\ell$ and $\Lambda_\ell = \Lambda_{\ell, x}$. Since $f$ has finite range, we can express it as a finite sum
\[
  f(\eta)
  = \sum_{A \subset \supp f} f_A \prod_{x \in A} \eta_x,
\]
where $f_A \in \R$ are constants. We take $N$ large enough so that $\supp f \subset \Lambda_\ell$. Since the inside of the absolute values in the estimate in Lemma \ref{l:GJ13:P31} is linear as an operator on $f$, it is enough to prove the lemma only for the functions $\eta \mapsto \prod_{x \in A} \eta_x$. Moreover, since $\nu_\beta$ and $\eta^\ell$ are invariant under swapping sites inside $\supp f$, it is enough to prove Lemma \ref{l:GJ13:P31} only for the functions
\[
  g_k(\eta) := \prod_{j=1}^k \eta_{x_j}
  \qquad k = 1,\ldots,L
\]
instead of $f$, where $\{x_1, \ldots, x_L\} = \supp f$ and $L = |\supp f|$.

Since $g_k$ is a finite list of functions, and since $\nu_\beta$ and $\eta^\ell$ are independent from the geometry of the bonds in $\Lambda_\ell$, the remainder of the proof can be copied from the proof of \cite[Proposition 3.1]{GJ}. We omit the details. 
\end{proof}

\begin{lem} \label{l:L10:9}
For all $N$
\begin{multline*}
      \E^{\mu^N} \bigg |\int_0^T \sum_{x \in \T_N^d} a_{t,x} E^{\nu_\beta}[f_x \mid \eta_x^\ell] dt \bigg | \\
      \leq C K^\theta \int_0^T  H(\mu_t^N | \nu_t^N) \frac{dt}{ [t(T-t)]^\Theta } 
  + C' K^\theta N^d \Big( \frac1{\ell_*^d} + \frac{K^{2/\sigma} \ell^2}{N^{1-\Theta}} \Big),
\end{multline*}
where the constants $C, C' > 0$ are independent of $N$. 
\end{lem}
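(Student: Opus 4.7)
The plan is to reduce the bound in two steps: first, an equivalence-of-ensembles replacement of $E^{\nu_\beta}[f_x\,|\,\eta_x^\ell]$ by $\tilde f_x(\eta_x^\ell)$ modulo small corrections, and then an entropy inequality applied at each time $t$ to the residual quantity, whose exponential moment is controlled via a block-independence decomposition under the product measure $\nu_t^N$. The structural fact that drives everything is built into the definition of $f_{t,x}$ in \eqref{f_tx eq}: one has $\tilde f_x(u_x)=0$ and $\tilde f_x'(u_x)=0$, so Taylor's theorem yields $|\tilde f_x(\rho)|\le C(\rho-u_x)^2$ uniformly on $[0,1]$ (with $C$ universal, since $h$ is local and $\tilde h$ is therefore a polynomial with uniformly bounded second derivative). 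This second-order vanishing at the local mean is the source of all the smallness.

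First I would apply Lemma~\ref{l:GJ13:P31} to write
\[
E^{\nu_\beta}[f_x\,|\,\eta_x^\ell]
 = \tilde f_x(\eta_x^\ell)
   - \frac{\chi(\eta_x^\ell)}{2\ell_*^d}\,\tilde f_x''(\eta_x^\ell) + R_x,
   \qquad |R_x|\le C\ell^{-2d}.
\]
Since $\tilde f_x''$ is uniformly bounded and $|a_{t,x}|\le CK^\theta/[t(T-t)]^\kappa$ with $\kappa<1$, summing over $x\in\T_N^d$ and integrating in $t$ makes the correction and remainder contribute at most $CK^\theta N^d\ell_*^{-d}$, which is the first piece of the claimed error. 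For the remaining deterministic-coefficient random quantity $F_t:=\sum_{x\in\T_N^d} a_{t,x}\tilde f_x(\eta_x^\ell)$, I would invoke the entropy inequality at each time $t$ (applied to both $\pm F_t$) with the time-dependent parameter $\gamma_t:=c\,[t(T-t)]^\kappa/K^\theta$ for a small $c>0$:
\[
\pm\E^{\mu_t^N}[F_t] \le \frac{1}{\gamma_t}H(\mu_t^N|\nu_t^N) + \frac{1}{\gamma_t}\log\E^{\nu_t^N}\bigl[e^{\pm\gamma_t F_t}\bigr].
\]
After integration in $t$, the $H$-term becomes exactly $\frac{1}{c}K^\theta\int_0^T H(\mu_t^N|\nu_t^N)[t(T-t)]^{-\kappa}\,dt$, the main term on the right of \eqref{l:L10:9}.

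The main obstacle is controlling the exponential moment $\log\E^{\nu_t^N}[e^{\pm\gamma_t F_t}]$. I would exploit the product structure of $\nu_t^N$: since $\tilde f_x(\eta_x^\ell)$ depends only on $\eta$ in $\Lambda_{\ell,x}$, partition $\T_N^d$ into $J=O(\ell_*^d)$ translated sublattices $S_1,\dots,S_J$ whose sites are at mutual distance greater than $2\ell$, so that $\{\tilde f_x(\eta_x^\ell):x\in S_j\}$ is $\nu_t^N$-independent within each $S_j$. Hölder's inequality then gives
\[
\log\E^{\nu_t^N}[e^{\gamma_t F_t}]
 \le \frac{1}{J}\sum_{j=1}^J \log\E^{\nu_t^N}\Bigl[\exp\Bigl(\gamma_t J\!\sum_{x\in S_j}a_{t,x}\tilde f_x(\eta_x^\ell)\Bigr)\Bigr],
\]
and inside each $S_j$ the expectation factorizes over $x$. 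Choosing $c$ small enough (using the hypothesis $K=O(N^{\e_0})$ and taking $\ell$ sub-polynomial in $N$) so that $\gamma_t J|a_{t,x}|\cdot\|\tilde f_x\|_\infty$ stays bounded, each factor admits the elementary bound $\log\E[e^{\lambda Y}]\le \lambda\E[Y]+C\lambda^2\E[Y^2]$. Writing $\bar u_x^\ell:=\frac{1}{\ell_*^d}\sum_{y\in\Lambda_{\ell,x}}u_y$, Taylor's theorem together with $\tilde f_x(u_x)=\tilde f_x'(u_x)=0$, the variance estimate $\E^{\nu_t^N}[(\eta_x^\ell-\bar u_x^\ell)^k]=O(\ell_*^{-\lceil k/2\rceil d})$, and the bound $|\bar u_x^\ell-u_x|\le CK^{1/\sigma}\ell/(N[t(T-t)]^\kappa)$ coming from \eqref{eq:nablau}, yields
\[
\bigl|\E^{\nu_t^N}[\tilde f_x(\eta_x^\ell)]\bigr| + \E^{\nu_t^N}[\tilde f_x(\eta_x^\ell)^2]
 \le C\Bigl(\ell_*^{-d} + \frac{K^{2/\sigma}\ell^2}{N^2[t(T-t)]^{2\kappa}}\Bigr).
\]
Summing these over $x$ against $a_{t,x}$ and integrating in $t$ reproduces the error term $CK^\theta N^d(\ell_*^{-d}+K^{2/\sigma}\ell^2N^{-(1-\kappa)})$. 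The delicate calibration is that the Hölder factor $J\sim\ell_*^d$ threatens to overwhelm the variance gain $O(\ell_*^{-d})$ produced by the vanishing of $\tilde f_x$ at $u_x$; balancing these, under the smallness constraint on $\gamma_t J|a_{t,x}|$, is exactly what forces the specific form of the second error term.
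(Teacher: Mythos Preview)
Your overall architecture matches the paper's: equivalence of ensembles, the quadratic vanishing $\tilde f_x(\rho)=O((\rho-u_x)^2)$, the entropy inequality at each time, and a H\"older/block-independence decomposition to reduce to single-site exponential moments. The gap is in the step you call the ``elementary bound'' $\log\E[e^{\lambda Y}]\le \lambda\E[Y]+C\lambda^2\E[Y^2]$ with $Y=\tilde f_x(\eta_x^\ell)$. That inequality is a Taylor bound on the cumulant generating function; it is only valid when $|\lambda|\,\|Y\|_\infty$ is bounded. Here $\|Y\|_\infty=O(1)$ and, by your own computation, $\gamma_t J|a_{t,x}|\sim c\,\ell_*^d$, so keeping $\lambda\|Y\|_\infty$ bounded forces $c\lesssim \ell_*^{-d}$; but then $1/\gamma_t$ carries an extra factor $\ell_*^d$ and the entropy term becomes $K^\theta\ell_*^d\int H\,dt/[t(T-t)]^\kappa$, which is too large. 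The ``delicate calibration'' you flag therefore does not balance.

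What rescues the argument is not smallness of $\E[Y^2]$ but the sub-exponential tail of $Y$ itself: since $|\tilde f_x(\eta_x^\ell)|\le C(\bar\eta_x^\ell)^2+C(u_x^\ell-u_x)^2$ with $\bar\eta_x^\ell=\ell_*^{-d}\sum_{y\in\Lambda_{\ell,x}}(\eta_y-u_y)$ a normalized sum of independent bounded centered variables, sub-Gaussian concentration gives $E^{\nu_t^N}\bigl[\exp\bigl(c\,\ell_*^d(\bar\eta_x^\ell)^2\bigr)\bigr]\le C$ for small fixed $c$. The paper uses exactly this: it first bounds $|\tilde f_x(\eta_x^\ell)|\le C(\bar\eta_x^\ell)^2+C(u_x^\ell-u_x)^2+C/\ell_*^d$, handles the deterministic piece $(u_x^\ell-u_x)^2$ separately, and applies the entropy inequality with a \emph{constant} parameter $\lambda=2^{-d}$ to the random piece $\sum_x(\bar\eta_x^\ell)^2$, so that after H\"older the exponent is precisely $\ell_*^d(\bar\eta_x^\ell)^2$ and the concentration inequality \cite[Lemma~3.6]{FT} gives a bounded log-moment. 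This yields $C\,H(\mu_t^N|\nu_t^N)+CN^d/\ell_*^d$ without losing the factor $\ell_*^d$ on the entropy. A secondary point: your gradient bound gives $(u_x^\ell-u_x)^2\lesssim K^{2/\sigma}\ell^2/(N^2[t(T-t)]^{2\kappa})$, so after multiplying by $|a_{t,x}|$ you face $\int_0^T[t(T-t)]^{-3\kappa}\,dt$, which diverges for $\kappa\ge 1/3$; the paper cures this by also using the trivial bound $|u_x^\ell-u_x|\le C$ and splitting the time integral at $t_N=(K^{1/\sigma}/N)^{1/\kappa}$.
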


\begin{proof}
Note from the given bound on the coefficients $a_{t,x}$ in \eqref{BG:assn:atx} that
\begin{equation*} 
\E^{\mu^N} \bigg |\int_0^T \sum_{x \in \T_N^d} a_{t,x} E^{\nu_\beta}[f_x \mid \eta_x^\ell] dt \bigg |
\leq C K^\theta \int_0^T E^{\mu_t^N} {\Big[}  \sum_{x \in \T_N^d} \big| E^{\nu_\beta}[f_x \mid \eta_x^\ell] \big| {\Big]} \frac{dt}{ [t(T-t)]^\Theta }.
\end{equation*}
By Lemma \ref{l:GJ13:P31}, this is bounded from above by
\begin{equation}\label{4-1}
  C K^\theta \int_0^T  E^{\mu_t^N} {\bigg[}  \sum_{x \in \T_N^d} \Big| \tilde f_x (\eta_x^\ell) - \frac{\chi(\eta_x^\ell)}{2 \ell_*^d} \frac{\partial^2 \tilde f_x}{\partial \rho^2 } (\eta_x^\ell) \Big| {\bigg]} \frac{dt}{ [t(T-t)]^\Theta } + C' \frac{ K^\theta N^d }{ \ell^{2d} } \int_0^T \frac{dt}{ [t(T-t)]^\Theta }.
\end{equation}
The second integral is an $N$-independent constant which scales as $T^{1 - 2\Theta}$. For the first integral, we use the explicit form of $f_x$ to obtain for all $0 \leq \rho \leq 1$ that
\begin{align*}
  \tilde f_x(\rho) 
  &= \tilde h(\rho) -\tilde h(u_x) -\tilde h'(u_x)(\rho - u_x) 
  = O\left((\rho-u_x)^2 \right), \\
  \frac{\partial^2 \tilde f_x}{\partial \rho^2 }(\rho) &= \frac{\partial^2 \tilde h}{\partial \rho^2 } (\rho) = O(1).
\end{align*}
Then, setting
\begin{align*}
\bar \eta_x^\ell := \eta_x^\ell - u_x^\ell, \quad  u_x^\ell = \dfrac{1}{\ell_*^d}\sum_{y \in \Lambda_{\ell,x}}u_y,
\end{align*}
we estimate the summand by
\[
  \bigg | \tilde f_x (\eta_x^\ell) - \frac{\chi(\eta_x^\ell)}{2 \ell_*^d} \frac{\partial^2 \tilde f_x}{\partial \rho^2 } (\eta_x^\ell) \bigg |
  \leq C \big( \bar \eta_x^\ell \big)^2 + C (u_x^\ell - u_x)^2 + C' / \ell_*^d.
\]
Using this we bound the first term in \eqref{4-1} from above by
\begin{equation}\label{4-2}
  C K^\theta \int_0^T  E^{\mu_t^N} {\Big[} \sum_{x \in \T_N^d} \big( \bar \eta_x^\ell \big)^2 {\Big]} \frac{dt}{ [t(T-t)]^\Theta } 
  + C K^\theta \int_0^T \sum_{x \in \T_N^d} (u_x^\ell - u_x)^2 \frac{dt}{ [t(T-t)]^\Theta }
  + C' K^\theta \frac{N^d}{\ell_*^d}.
\end{equation}

To bound the second second term in \eqref{4-2}, we combine the uniform bound on $u_x$ with the bound on the gradient in \eqref{eq:nablau} to obtain
\begin{align*}
  |u_x^\ell - u_x|
  \leq \frac1{\ell_*^d} \sum_{y \in \Lambda_{\ell,x}} |u_y - u_x|
  &\leq \frac C{\ell_*^d} \sum_{y \in \Lambda_{\ell,x}} d \ell \max_{1 \leq i \leq d} |u_{x + e_i} - u_x| \\
  &\leq C \ell \min \Big\{ \frac{K^{1/\sigma}}{N [t(T-t)]^\Theta}, 1 \Big\}.
\end{align*}
Then, setting $t_N := (K^{1/\sigma} / N)^{1/\Theta}$ and assuming that $N$ is large enough, we estimate
\[
  C K^\theta \int_0^T \sum_{x \in \T_N^d} (u_x^\ell - u_x)^2 \frac{dt}{ [t(T-t)]^\Theta }
  \leq C' K^\theta \ell^2 N^d \bigg( \int_0^{t_N} t^{-\Theta} dt + \frac{K^{2/\sigma}}{N^2} \int_{t_N}^{T/2} t^{-3\Theta} dt \bigg).
\]
We estimate the integrals in the right-hand side as
\[
  \int_0^{t_N} t^{-\Theta} dt 
  = C t_N^{1 - \Theta} 
  \leq C' \frac{K^{1/\sigma}}{N^{1-\Theta}}
\]
and
\[
  \int_{t_N}^{T/2} t^{-3\Theta} dt
  \leq \int_{N^{-1/\Theta}}^{T/2} t^{-3\Theta} dt
  \leq C N^{1+\Theta}.
\]
Putting these estimates together, we conclude that
\[
  C K^\theta \int_0^T \sum_{x \in \T_N^d} (u_x^\ell - u_x)^2 \frac{dt}{ [t(T-t)]^\Theta }
  \leq C' K^{\theta + 2/\sigma} \ell^2 N^d / N^{1-\Theta}.
\]

For the first term in \eqref{4-2}, we use the entropy inequality with constant $\lambda > 0$ to obtain
\[
  E^{\mu_t^N} {\Big[} \sum_{x \in \T_N^d} \big( \bar \eta_x^\ell \big)^2 {\Big]}
  \leq \frac1\lambda H(\mu_t^N | \nu_t^N) 
  +  \frac1\lambda \log E^{\nu_t^N} {\bigg[} \exp \bigg( \lambda \sum_{x \in \T_N^d} \big( \bar \eta_x^\ell \big)^2 \bigg) {\bigg]}.
\]
For the second term, we apply the usual argument for taking $\sum_x$ in front, which relies on H\"older's inequality and the observation that $\bar \eta_x^\ell$ and $\bar \eta_y^\ell$ are independent if $| x - y |_\infty \geq 2\ell_*$. Details can be found in \cite[Lemma 4.2 and Lemma B.4]{JM1}. This yields 
\[
  \frac1\lambda \log E^{\nu_t^N} {\bigg[} \exp \bigg( \lambda \sum_{x \in \T_N^d} \big( \bar \eta_x^\ell \big)^2 \bigg) {\bigg]}
  \leq \frac1{2^d \lambda \ell_*^d} \sum_{x \in \T_N^d} \log E^{\nu_t^N} {\Big[} \exp \Big( 2^d \lambda \ell_*^d \big( \bar \eta_x^\ell \big)^2 \Big) {\Big]}.
\]

Next we apply the concentration inequality \cite[Lemma 3.6]{FT}. With this aim, we set $\lambda = 2^{-d}$. Since the random variables $\{ \eta_y - u_y \}_{y \in \Lambda_{\ell,x}}$ are independent, have mean zero under $\nu_t^N$, and have values in the interval $[-u_x, 1 - u_x]$ of length $1$, the concentration inequality states that 
\[
  \log E^{\nu_t^N} {\Big[} \exp \Big( 2^d \lambda \ell_*^d \big( \bar \eta_x^\ell \big)^2 \Big) {\Big]}
  = \log E^{\nu_t^N} {\bigg[} \exp \bigg( \ell_*^{-d} \bigg( \sum_{y \in \Lambda_{\ell,x}} (\eta_y - u_y) \bigg)^2 \bigg) {\bigg]}
  \leq 2.
\]

In conclusion, the first term of \eqref{4-2} is bounded from above by 
\[
  C K^\theta \int_0^T  H(\mu_t^N | \nu_t^N) \frac{dt}{ [t(T-t)]^\Theta } 
  + C' K^\theta \frac{N^d}{\ell_*^d}.
\]
This completes the proof of Lemma \ref{l:L10:9}.
\end{proof}

\subsection{Proof of Theorem \ref{BG}} 
\label{s:BG:pf:pf}

Next we show that Theorem \ref{BG} follows by applying the estimates in Lemmas \ref{l:L10:6} and \ref{l:L10:9} to \eqref{BG:pf:0}. Indeed, the first term in the right-hand side of \eqref{BG:est} comes directly from the first term in the right-hand side of the estimate in Lemma \ref{l:L10:9}. It is left to show that the other error terms in Lemmas \ref{l:L10:6} and \ref{l:L10:9} can be bounded by $C N^{d - \epsilon_1}$ for some $\epsilon_1 > 0$ for an appropriate choice of $\ell, \e, \gamma$. We demonstrate this with some simple algebra by choosing $\ell = N^{\alpha_\ell}$, $\frac1\e = N^{\alpha_\e}$, $\gamma = N^{\alpha_\gamma}$ and assuming that $K \leq C N^{\alpha_K}$ for certain constants $\alpha_\ell, \alpha_\e, \alpha_\gamma, \alpha_K > 0$ which we choose below. The restrictions $1 \ll \ell \ll N$ and \eqref{l:L10:6:assn} are met if
\begin{align} \label{BG:pf:8}
  \alpha_\ell < 1
  \quad \text{and} \quad
  \alpha_\gamma + \Theta \alpha_\e + \theta \alpha_K + (d+2) \alpha_\ell < 2.
\end{align}
For the five remaining error terms (three in Lemma \ref{l:L10:6}, and the latter two in Lemma \ref{l:L10:9}) to be bounded by $C N^{d - \epsilon_1}$, the conditions on the exponents are respectively
\begin{subequations} \label{BG:pf:9} 
\begin{align} \label{BG:pf:9a}
  \theta \alpha_K &< (1 - \Theta) \alpha_\e, \\\label{BG:pf:9b}
  \alpha_K &< \alpha_\gamma, \\\label{BG:pf:9c}
  \alpha_\gamma + (d+2)\alpha_\ell + 2 \theta \alpha_K + 2 \Theta \alpha_\e &< 2, \\\label{BG:pf:9d}
  \theta \alpha_K &< d \alpha_\ell, \\\label{BG:pf:9e}
  \Big( \theta + \frac2\sigma \Big) \alpha_K + 2 \alpha_\ell &< 1-\Theta.  
\end{align}
\end{subequations}
Note that \eqref{BG:pf:8}, \eqref{BG:pf:9c} and \eqref{BG:pf:9e} are met if
\begin{equation}\label{BG:pf:10}
 \alpha_\gamma + \Big( d + \frac4{1-\Theta} \Big) \alpha_\ell + \frac2{1 - \Theta} \Big( \theta + \frac2\sigma \Big) \alpha_K + 2 \Theta \alpha_\e < 2.
\end{equation}
To satisfy conditions \eqref{BG:pf:9a}, \eqref{BG:pf:9b} and \eqref{BG:pf:9d}, we take
\[
   \alpha_\e = 2 \frac\theta{1 - \Theta} \alpha_K,
   \quad \alpha_\gamma = 2 \alpha_K
   \quad \text{and} \quad \alpha_\ell = \frac{ 2 \theta }d \alpha_K.
\]
Substituting these choices in \eqref{BG:pf:10} and using that $\Theta < 1$, $\sigma < 1$ and $d \geq 1$, we observe that \eqref{BG:pf:10} is satisfied if
\[
  \alpha_K = \frac{1 - \Theta}{10 \theta + 3\sigma^{-1}}.
\]
Hence, all the error terms in Lemmas \ref{l:L10:6} and \ref{l:L10:9} (other than the term involving the entropy) can be bounded by $C N^{d - \epsilon_1}$ for some $\epsilon_1 > 0$.

\end{document}